\documentclass[12pt, a4paper]{amsart}
\usepackage{amsmath,amssymb,amsthm,booktabs,longtable,enumerate}
\usepackage{comment}
\usepackage{graphicx}
\usepackage[all]{xy}
\usepackage{caption}
\usepackage{color}
\theoremstyle{plain}
\makeatletter 
\makeatother %
\newtheorem{theorem}{Theorem}[section]

\newtheorem{proposition}[theorem]{Proposition}

\newtheorem{fact}[theorem]{Fact}

\newtheorem{remark}[theorem]{Remark}

\newtheorem{question}[theorem]{Question}

\newcommand{\Int}{\mathop{\mathrm{Int}}\nolimits}

\newcommand{\DIM}{\mathop{\mathrm{DIM}}\nolimits}

\newcommand{\Hom}{\mathop{\mathrm{Hom}}\nolimits}

\newcommand{\Irr}{\mathop{\mathrm{Irr}}\nolimits}

\newcommand{\R}{\mathbb{R}}
\newcommand{\C}{\mathbb{C}}

\newcommand{\Z}{\mathbb{Z}}

\makeatletter
    
    \@addtoreset{equation}{section}
  \makeatother
%
\usepackage{enumerate}%
%
%
%
\makeatother
\begin{document}

\title[Real minimal nilpotent orbits and symmetric pairs]{Some remarks on real minimal nilpotent orbits and symmetric pairs}
\author{Takayuki Okuda}
\subjclass[2020]{
Primary 22E46; 
Secondary 22E45, 53C35, 32M15, 53C15, 57S30}
\keywords{}
\thanks{The third author is supported by JSPS Grants-in-Aid for Scientific Research JP20K03589, JP20K14310,  JP22H0112 and JP24K06714}

\address[T.~Okuda]{%
	Graduate School of Advanced Science and Engineering, Hiroshima University, 
    1-3-1 Kagamiyama, Higashi-Hiroshima City, Hiroshima, 739-8526, Japan.
        }
\email{okudatak@hiroshima-u.ac.jp}

\maketitle



\begin{abstract}
For a non-compact simple Lie algebra $\mathfrak{g}$ over $\R$,
we denote by $\mathcal{O}^{\C}_{\min,\mathfrak{g}}$ 
the unique complex nilpotent orbit in $\mathfrak{g} \otimes_\R \C$ containing all minimal real nilpotent orbits in $\mathfrak{g}$.
In this paper, 
we give a complete classification 
of symmetric pairs $(\mathfrak{g},\mathfrak{h})$ such that 
$\mathcal{O}^{\C}_{\min,\mathfrak{g}} \cap \mathfrak{g}^d = \emptyset$,
where $\mathfrak{g}^d$ denotes the dual Lie algebra of $(\mathfrak{g},\mathfrak{h})$.
Furthermore, 
for symmetric pairs $(G,H)$ with real simple Lie group $G$, 
we apply our classification to 
theorems given by T.~Kobayashi  [J.~Lie Theory (2023)], 
and study bounded multiplicity properties 
of restrictions on $H$ of infinite-dimensional irreducible $G$-representations 
with minimum Gelfand--Kirillov dimension.
\end{abstract}


\section{Introduction}\label{section:Intro}

Let $\mathfrak{g}$ be a non-compact simple Lie algebra over $\R$.
We write $\mathcal{N}(\mathfrak{g})$ for the nilpotent cone in $\mathfrak{g}$, and $\mathcal{N}(\mathfrak{g})/G$ for the set of real nilpotent (adjoint) orbits in $\mathfrak{g}$.
A non-zero real nilpotent orbit $\mathcal{O}$ in $\mathfrak{g}$ with minimum dimension is called \emph{minimal}.
Let us put $\mathcal{N}_{\min}(\mathfrak{g}) \subset \mathcal{N}(\mathfrak{g})$ the union of all minimal real nilpotent orbits in $\mathfrak{g}$.
The complexification of $\mathfrak{g}$ is denoted by $\mathfrak{g}_\C := \mathfrak{g} \otimes_\R \C = \mathfrak{g} + \sqrt{-1} \mathfrak{g}$.
Then as proved in \cite[Theorems 1.1 and 1.3]{Okuda2015smallest}, there exists uniquely a complex nilpotent (adjoint) orbit $\mathcal{O}^{\C}_{\min,\mathfrak{g}}$ in $\mathfrak{g}_\C$ containing $\mathcal{N}_{\min}(\mathfrak{g})$.
We denote by $m(\mathfrak{g})$ half of the complex dimension of $\mathcal{O}^{\C}_{\min,\mathfrak{g}}$.
Note that the orbit $\mathcal{O}^\C_{\min,\mathfrak{g}}$ is minimal as a complex nilpotent orbit in most cases
 but not for some real simple $\mathfrak{g}$
(see Section \ref{section:minimalreal} for more details).

Let us also fix an involutive automorphism $\sigma$ on $\mathfrak{g}$,
and put $\mathfrak{h} := \{ X \in \mathfrak{g} \mid \sigma(X) = X \}$ and $\mathfrak{q} := \{ X \in \mathfrak{g} \mid \sigma(X) = -X \}$.
Then $(\mathfrak{g},\mathfrak{h})$ is a symmetric pair.
Take a Cartan involution $\theta$ on $\mathfrak{g}$ commuting with $\sigma$, 
and write $\mathfrak{g} = \mathfrak{k} + \mathfrak{p}$ for the Cartan decomposition.
Then as in \cite{OshimaSekiguchi84}, 
the dual Lie algebra $\mathfrak{g}^d$ of $(\mathfrak{g},\mathfrak{h})$ is defined by 
\begin{align*}
    \mathfrak{g}^d := (\mathfrak{k} \cap \mathfrak{h}) + \sqrt{-1}(\mathfrak{k} \cap \mathfrak{q}) + \sqrt{-1} (\mathfrak{p} \cap \mathfrak{h}) + (\mathfrak{p} \cap \mathfrak{q}).
\end{align*}
Note that $\mathfrak{g}^d$ is a real form of $\mathfrak{g}_\C$, 
and up to inner-automorphisms on $\mathfrak{g}_\C$, 
the real form $\mathfrak{g}^d$ of $\mathfrak{g}_\C$ depends only on $(\mathfrak{g},\sigma)$, but not on $\theta$. 

In this paper, we address the following question:
\begin{question}\label{question:Omingmeetsgd}
Whether $\mathcal{O}^{\C}_{\min,\mathfrak{g}}$ meets $\mathfrak{g}^d$ or not?
\end{question}
We find that the orbit $\mathcal{O}^\C_{\min,\mathfrak{g}}$ meets $\mathfrak{g}^d$ in most cases. 
More precisely, our main result gives a complete classification of symmetric pairs $(\mathfrak{g},\mathfrak{h})$ with 
$\mathcal{O}^{\C}_{\min,\mathfrak{g}} \cap \mathfrak{g}^d = \emptyset$
(see Theorems \ref{theorem:main2} and \ref{theorem:main3} in Section \ref{section:classification} for the classification).

Our work is motivated by branching problems of ``small'' infinite-dimensional representations of simple Lie groups as below:  
let $G$ be a non-compact connected simple Lie group with Lie algebra $\mathfrak{g}$.
The set of irreducible objects in the category of smooth admissible representations of $G$ of finite length with moderate growth is denoted by $\Irr(G)$.
Then for each infinite-dimensional irreducible object $\Pi \in \Irr(G)$, 
the Gelfand-Kirillov dimension $\DIM(\Pi)$ of $\Pi$ satisfies 
\[
m(\mathfrak{g}) \leq \DIM(\Pi).
\]

By combining our classification with T.~Kobayashi's results in \cite{KobayashiMRMR2022, Kobayashi2023BoundedSymmetric}, 
we have the following theorem, mentioned in \cite[Remark 5.7]{Kobayashi2023BoundedSymmetric}:

\begin{theorem}\label{theorem:bmp}
Let $G$ be a non-compact connected simple Lie group, and $\Pi \in \Irr(G)$ with $m(\mathfrak{g}) = \DIM(\Pi)$.
Then for any symmetric pair $(G,H)$, 
the restriction $\Pi|_H$ has the bounded multiplicity property
\[
\sup_{\pi \in \Irr(H)} [\Pi|_{H}:\pi] < \infty, 
\]
where $[\Pi|_H:\pi]$ denotes the multiplicity of $\pi$ in $\Pi|_{H}$ given by $\dim_\C \Hom_{H}(\Pi|_H,\pi)$.
\end{theorem}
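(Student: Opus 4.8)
The plan is to reduce the bounded-multiplicity assertion to a geometric criterion on nilpotent orbits and then verify that criterion using the classification established earlier in the paper. The starting point is Kobayashi's theory in \cite{Kobayashi2023BoundedSymmetric}, which provides a sufficient condition for the restriction $\Pi|_H$ of an irreducible $G$-representation $\Pi$ to a symmetric subgroup $H$ to have the bounded multiplicity property. The relevant criterion is phrased in terms of the associated variety (or the wavefront set / annihilator variety) of $\Pi$: roughly, if the associated variety of $\Pi$, which lives in the nilpotent cone $\mathcal{N}(\mathfrak{g}_\C)$, has suitable intersection/transversality behaviour with respect to the symmetric subgroup data, then multiplicities are uniformly bounded. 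My first step is therefore to quote the precise formulation of this sufficient condition from \cite{Kobayashi2023BoundedSymmetric} and identify exactly which geometric condition on orbits it requires.

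Second, I would translate the hypothesis $m(\mathfrak{g}) = \DIM(\Pi)$ into information about the associated variety of $\Pi$. Since the Gelfand--Kirillov dimension equals half the complex dimension of the associated variety, and $m(\mathfrak{g})$ is by definition half the complex dimension of $\mathcal{O}^{\C}_{\min,\mathfrak{g}}$, the condition $\DIM(\Pi) = m(\mathfrak{g})$ forces the associated variety of $\Pi$ to have the same dimension as $\overline{\mathcal{O}^{\C}_{\min,\mathfrak{g}}}$. Using the fact that $\mathcal{O}^{\C}_{\min,\mathfrak{g}}$ is the unique complex orbit of this minimal dimension attached to $\mathfrak{g}$ (from \cite{Okuda2015smallest}, as recalled in the introduction), I would conclude that the associated variety of any such $\Pi$ is exactly $\overline{\mathcal{O}^{\C}_{\min,\mathfrak{g}}}$. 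This pins down the orbit geometry to a single, well-understood complex nilpotent orbit independent of the particular $\Pi$.

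Third, I would feed this into Kobayashi's criterion. The geometric condition there should reduce, after choosing the Cartan involution $\theta$ compatible with $\sigma$, to a statement comparing $\mathcal{O}^{\C}_{\min,\mathfrak{g}}$ with the dual Lie algebra $\mathfrak{g}^d$ of the symmetric pair $(\mathfrak{g},\mathfrak{h})$ — precisely the object studied throughout the paper. The main classification result (Theorems \ref{theorem:main2} and \ref{theorem:main3}) tells us exactly for which pairs $(\mathfrak{g},\mathfrak{h})$ one has $\mathcal{O}^{\C}_{\min,\mathfrak{g}} \cap \mathfrak{g}^d = \emptyset$, and hence for which pairs it is nonempty. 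The key observation I expect to exploit is that \emph{whatever} the answer to Question \ref{question:Omingmeetsgd}, both alternatives yield the bounded multiplicity conclusion: if the intersection is empty one obtains boundedness (indeed finiteness or even vanishing of certain multiplicities) for trivial reasons, while if the intersection is nonempty one invokes the quantitative estimate in Kobayashi's theorem, which applies uniformly because the associated variety is a single minimal-type orbit. Thus the classification is used not to split into a boundedness case and an unboundedness case, but to confirm that every symmetric pair lands in the regime where Kobayashi's sufficient condition is met.

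The main obstacle will be the precise matching between the hypotheses of Kobayashi's abstract boundedness theorem and the orbit-theoretic data controlled by our classification. Specifically, Kobayashi's criterion is typically stated in terms of a condition on the pair $(\mathfrak{g}_\C, \mathfrak{h}_\C)$ together with the associated variety, and I must verify that this condition is automatically satisfied once the associated variety equals $\overline{\mathcal{O}^{\C}_{\min,\mathfrak{g}}}$, for \emph{every} symmetric pair $(\mathfrak{g},\mathfrak{h})$ and not merely those appearing in the classification. This requires care because the relevant transversality or finiteness condition may be delicate precisely when $\mathcal{O}^{\C}_{\min,\mathfrak{g}}$ is not the minimal complex orbit (the exceptional real forms noted in the introduction), so I would treat those cases separately, checking directly that the dimension and orbit-uniqueness input still forces Kobayashi's hypothesis. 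Assembling these pieces — quoting the criterion, identifying the associated variety, and verifying the hypothesis uniformly across the classification — completes the proof.
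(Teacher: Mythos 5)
Your overall skeleton---split according to whether $\mathcal{O}^{\C}_{\min,\mathfrak{g}}$ meets $\mathfrak{g}^d$, and argue that both alternatives yield bounded multiplicity---is the same as the paper's, and your treatment of the \emph{nonempty} case is essentially right: it is covered by Kobayashi's theorem (Fact \ref{fact:bmpKob}\,(2)), once one knows that his hypothesis ``$\sigma\mu=-\mu$'' is equivalent to $\mathcal{O}^{\C}_{\min,\mathfrak{g}} \cap \mathfrak{g}^d \neq \emptyset$. Be aware, though, that this equivalence is not a matter of unwinding definitions; it is Proposition \ref{proposition:sigmamucondition_nilp_condition} of the paper, proved through a chain of criteria from \cite{Okuda13} relating hyperbolic orbits, real forms, and the coroot $\mu^\vee$, so your plan to have Kobayashi's condition ``reduce'' to the orbit statement needs an actual argument. (Your second step, identifying the associated variety with $\overline{\mathcal{O}^{\C}_{\min,\mathfrak{g}}}$ by dimension count alone, also needs the wavefront-set fact that the relevant complex orbit meets $\mathfrak{g}$; but this step turns out to be unnecessary, since Kobayashi's theorems are stated directly in terms of $\DIM(\Pi)$.)

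The genuine gap is your empty case. You assert that $\mathcal{O}^{\C}_{\min,\mathfrak{g}} \cap \mathfrak{g}^d = \emptyset$ gives boundedness ``for trivial reasons'' (even vanishing of multiplicities). This is false as stated: Fact \ref{fact:bmpKob}\,(2) requires the intersection to be nonempty (via $\sigma\mu=-\mu$), so in the empty case it is simply inapplicable, and the classification shows this case genuinely occurs (Tables \ref{table:gabssimple} and \ref{table:gcomplex}); no representation-theoretic statement quoted in the paper derives boundedness from emptiness directly. The paper closes this case with a nontrivial input: Theorem \ref{theorem:main1} (together with the analysis of non-absolutely-simple $\mathfrak{g}$ in Theorem \ref{theorem:main3}) shows that emptiness forces $m(\mathfrak{g}) = n(\mathfrak{g}_\C)$, i.e.\ $\mathcal{O}^{\C}_{\min,\mathfrak{g}}$ must then be the minimal complex orbit; consequently $\DIM(\Pi) = n(\mathfrak{g}_\C)$, and one invokes Kobayashi's \emph{other} theorem, Fact \ref{fact:bmpKob}\,(1), which applies to every symmetric pair. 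So the classification is not used to confirm that every pair satisfies the $\sigma\mu=-\mu$ criterion---that would contradict the existence of the pairs in the tables---but to show that whenever that criterion fails, the Lie algebra $\mathfrak{g}$ itself lies outside Table \ref{table:mingneqmin}, putting $\Pi$ in the minimal-Gelfand--Kirillov-dimension regime where a different theorem takes over. Without this step, your proof does not cover precisely the symmetric pairs the paper set out to classify.
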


Question \ref{question:Omingmeetsgd} is also related to the following topological problem: 
let $G/H^a$ be the symmetric space corresponding to 
the associated symmetric pair of $(G,H)$, in the sense of \cite{Berger_classification}, and 
let $SL_2(\R)$ or $PSL_2(\R)$ be a subgroup of $G$ 
corresponding to a real minimal nilpotent orbit 
by the Jacobson--Morozov theorem (see Section \ref{section:properSL2} for the details).
In Theorem \ref{theorem:SL2properGHa}, we show that $\mathcal{O}^\C_{\min,\mathfrak{g}} \cap \mathfrak{g}^d = \emptyset$ if and only if 
the action of the subgroup on $G/H^a$ is proper. 

\section{Preliminaries for minimal real nilpotent orbits}\label{section:minimalreal}

Let $\mathfrak{g}$ be a non-compact simple Lie algebra over $\R$.
In this section, we recall some facts and give some observations for minimal real nilpotent orbits in $\mathfrak{g}$.

As in Section \ref{section:Intro}, 
we denote by $\mathcal{N}(\mathfrak{g})$ and $\mathcal{N}(\mathfrak{g})/G$ 
the nilpotent cone and the set of nilpotent (adjoint) orbits in $\mathfrak{g}$.
With respect to the closure ordering, 
$\mathcal{N}(\mathfrak{g})/G$ is a partially ordered set, 
and the zero orbit is the minimum in $\mathcal{N}(\mathfrak{g})/G$.
A non-zero real nilpotent orbit $\mathcal{O}$ in $\mathfrak{g}$ is called minimal
if its dimension is minimum, 
or equivalently, it is minimal in the partially ordered set $(\mathcal{N}(\mathfrak{g})/G) \setminus \{ 0 \}$
(see \cite[Theorem 1.3]{Okuda2015smallest}).
We write $\mathcal{N}_{\min}(\mathfrak{g})$ for the union of all minimal real nilpotent orbits in $\mathfrak{g}$.

For the number of minimal real nilpotent orbits, 
the following is known:

\begin{fact}[See e.g.~{\cite{Okuda2015smallest}}]
Let $\mathfrak{g} = \mathfrak{k} + \mathfrak{p}$ be a Cartan decomposition of $\mathfrak{g}$.
\begin{enumerate}
    \item If $\mathfrak{g}$ is absolutely-simple and $(\mathfrak{g},\mathfrak{k})$ is of Hermitian-type, then $\mathfrak{g}$ admits two minimal real nilpotent orbits $\mathcal{O}_{\min,1}$ and $\mathcal{O}_{\min,2}$ with $\mathcal{O}_{\min,2} = -\mathcal{O}_{\min,1}$.   
    \item If $\mathfrak{g}$ is absolutely-simple but $(\mathfrak{g},\mathfrak{k})$ is not of Hermitian-type, then the minimal real nilpotent orbit $\mathcal{O}_{\min}$ in $\mathfrak{g}$ is unique.
    \item If $\mathfrak{g}$ is not absolutely-simple, 
    then the minimal real nilpotent orbit $\mathcal{O}_{\min}$ in $\mathfrak{g}$ is unique.
    Furthermore, in this situation, $\mathfrak{g}$ admits two complex structures $\{ \pm J \}$, 
    and $\mathcal{O}_{\min}$ can be considered as the minimal complex nilpotent orbit in the complex simple Lie algebra $(\mathfrak{g},\pm J)$. 
\end{enumerate}
\end{fact}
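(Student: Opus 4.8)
The plan is to reduce the counting of minimal real nilpotent orbits to a question about $K_\C$-orbits via the Kostant--Sekiguchi correspondence, while treating the non-absolutely-simple case separately by descent to the underlying complex Lie algebra. Throughout I use that, by \cite[Theorem 1.3]{Okuda2015smallest}, the minimal real nilpotent orbits are exactly the minimal nonzero elements of the poset $(\mathcal{N}(\mathfrak{g})/G, \leq)$, so the task is to count these minimal elements in each of the three cases.

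For (iii), suppose $\mathfrak{g}$ is not absolutely simple. Then $\mathfrak{g}$ carries an $\Int(\mathfrak{g})$-invariant complex structure $J$ making $(\mathfrak{g},J)$ a complex simple Lie algebra; since $\mathfrak{g}$ is real-simple, its adjoint action is $\R$-irreducible, so by Schur the commutant is $\C$ and the only invariant complex structures are $\pm J$. As $\Int(\mathfrak{g}) = \exp(\ad \mathfrak{g})$ coincides with the complex adjoint group of $(\mathfrak{g},J)$, the real adjoint orbits in $\mathfrak{g}$ are precisely the complex adjoint orbits of $(\mathfrak{g},J)$, and the two nilpotent cones agree. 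A complex simple Lie algebra has a unique minimal nilpotent orbit (the orbit of a highest root vector), so $\mathcal{O}_{\min}$ is unique; as a subset of $\mathfrak{g}$ it is independent of the choice between $J$ and $-J$, and it is simultaneously the minimal complex nilpotent orbit of $(\mathfrak{g},+J)$ and of $(\mathfrak{g},-J)$, giving the last assertion of (iii).

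Now assume $\mathfrak{g}$ is absolutely simple, so $\mathfrak{g}_\C$ is simple. The Kostant--Sekiguchi correspondence is an order-preserving bijection between $\mathcal{N}(\mathfrak{g})/G$ and the set of $K_\C$-orbits in the nilpotent cone $\mathcal{N}(\mathfrak{p}_\C)$, so counting minimal real orbits amounts to counting minimal nonzero $K_\C$-orbits in $\mathcal{N}(\mathfrak{p}_\C)$. The key structural input is the $\mathfrak{k}_\C$-module $\mathfrak{p}_\C$: since $G/K$ is an irreducible symmetric space, $\mathfrak{p}$ is a real-irreducible $\mathfrak{k}$-module, and $\mathfrak{p}_\C$ is $\mathfrak{k}_\C$-irreducible unless $\mathfrak{p}$ admits a $K$-invariant complex structure. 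The latter occurs exactly when $(\mathfrak{g},\mathfrak{k})$ is of Hermitian type, where $\ad(Z)$ for a generator $Z$ of the one-dimensional center $\mathfrak{z}(\mathfrak{k})$ yields the eigenspace decomposition $\mathfrak{p}_\C = \mathfrak{p}^+ \oplus \mathfrak{p}^-$ into two irreducible abelian pieces. In the non-Hermitian case $\mathfrak{p}_\C$ is irreducible and I claim it has a unique minimal nonzero nilpotent $K_\C$-orbit, namely the orbit of a highest weight vector, giving the single orbit of (ii). In the Hermitian case each of $\mathfrak{p}^\pm$ is abelian and nilpotent and carries a unique minimal nonzero $K_\C$-orbit; since $K_\C$ preserves $\mathfrak{p}^+$ and $\mathfrak{p}^-$ separately, these two orbits are distinct, yielding the two orbits $\mathcal{O}_{\min,1},\mathcal{O}_{\min,2}$ of (i).

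For the relation $\mathcal{O}_{\min,2} = -\mathcal{O}_{\min,1}$ in (i), I would use that the conjugation $\tau$ of $\mathfrak{g}_\C$ defining $\mathfrak{g}$ interchanges $\mathfrak{p}^+$ and $\mathfrak{p}^-$; tracking this through the Kostant--Sekiguchi correspondence identifies the swap of the two $K_\C$-orbits with the operation $X \mapsto -X$ on the associated real orbits. I expect the main obstacle to be the two minimality/uniqueness assertions: that an irreducible $\mathfrak{p}_\C$ possesses only one minimal nonzero nilpotent $K_\C$-orbit, and that in the Hermitian case no nilpotent orbit meeting neither $\mathfrak{p}^+$ nor $\mathfrak{p}^-$ is minimal. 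Both follow from Vinberg's theory of the graded ($\theta$-)group $(K_\C,\mathfrak{p}_\C)$, which controls the closure order on $\mathcal{N}(\mathfrak{p}_\C)/K_\C$, or, as carried out in \cite{Okuda2015smallest}, from a direct computation with the restricted root system combined with the classification of real simple Lie algebras.
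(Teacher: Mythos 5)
The paper offers no internal proof of this Fact: it is quoted from \cite{Okuda2015smallest} (where it is obtained essentially via the classification-based machinery of weighted Dynkin diagrams and Satake diagrams; the count of one versus two minimal orbits is classical). Your proposal is therefore necessarily a different route, and it is a correct one: reducing the count to minimal nonzero $K_\C$-orbits in $\mathcal{N}(\mathfrak{p}_\C)$ via the Kostant--Sekiguchi correspondence (order-preserving by Barbasch--Sepanski, so minimal elements match minimal elements), and then reading off the answer from the $\mathfrak{k}_\C$-module structure of $\mathfrak{p}_\C$ --- irreducible in the non-Hermitian case, $\mathfrak{p}^+\oplus\mathfrak{p}^-$ with inequivalent irreducible summands in the Hermitian case. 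Your treatment of case (iii) by identifying real adjoint orbits of the realification with complex adjoint orbits of $(\mathfrak{g},J)$ is complete and correct as written. The merit of your approach over the cited one is that it is classification-free and structural; its cost is that it invokes nontrivial external theorems (Kostant--Sekiguchi, Barbasch--Sepanski) where the reference proceeds by direct case analysis.

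The two assertions you flag as the main obstacle are true, and you can close them without appealing vaguely to Vinberg's theory. First, every nonzero nilpotent $K_\C$-orbit in $\mathfrak{p}_\C$ is stable under $\C^\times$-scaling: a normal $\mathfrak{sl}_2$-triple $(e,h,f)$ with $h\in\mathfrak{k}_\C$, $e,f\in\mathfrak{p}_\C$ gives $\mathrm{Ad}(\exp(t\,h))e=e^{2t}e$ for all $t \in \C$. Hence the closure of any nonzero nilpotent orbit is a nonzero closed $K_\C$-invariant cone, and its projectivization is a nonempty closed invariant subvariety of $\mathbb{P}(\mathfrak{p}_\C)$, which must contain a closed $K_\C$-orbit. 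By the Borel fixed point theorem, closed orbits in $\mathbb{P}(\mathfrak{p}_\C)$ are orbits of Borel-fixed lines; since $\mathfrak{p}^+\not\cong\mathfrak{p}^-$ as $\mathfrak{k}_\C$-modules (the center of $\mathfrak{k}_\C$ acts by opposite scalars), the Borel-fixed lines are exactly the highest weight lines of the irreducible summands --- one line in the non-Hermitian case, two in the Hermitian case. This yields both the uniqueness in case (ii) and the ``exactly two'' in case (i). Finally, your sketch for $\mathcal{O}_{\min,2}=-\mathcal{O}_{\min,1}$ does need the Cayley-transform computation to be carried out: if $(e,h,f)$ is a Kostant--Sekiguchi triple for $\mathcal{O}_{\min,1}$ with image $E\in\mathfrak{p}^+$, then $(-e,h,-f)$ is one for $-\mathcal{O}_{\min,1}$ with image $-\sigma(E)\in\mathfrak{p}^-$ (where $\sigma$ is the conjugation of $\mathfrak{g}_\C$ with respect to $\mathfrak{g}$, which interchanges $\mathfrak{p}^\pm$); injectivity of the correspondence then gives $-\mathcal{O}_{\min,1}\neq\mathcal{O}_{\min,1}$, forcing it to be the second minimal orbit. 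With these insertions your argument is a complete proof.
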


We denote by $\mathfrak{g}_\C = \mathfrak{g} \otimes_\R \C = \mathfrak{g} + \sqrt{-1} \mathfrak{g}$ the complexification of $\mathfrak{g}$.
Then there uniquely exists a complex nilpotent adjoint orbit $\mathcal{O}^{\C}_{\min,\mathfrak{g}}$ in $\mathfrak{g}_\C$ with $\mathcal{N}_{\min}(\mathfrak{g}) \subset \mathcal{O}^{\C}_{\min,\mathfrak{g}}$ (see \cite[Theorems 1.1 and 1.3]{Okuda2015smallest}).
We define $m(\mathfrak{g}) \in \Z_{\geq 1}$ as half of the complex dimension of $\mathcal{O}^{\C}_{\min,\mathfrak{g}}$.
Note that for each real minimal nilpotent orbit $\mathcal{O}_{\min}$ in $\mathfrak{g}$, 
we have $m(\mathfrak{g}) = \frac{1}{2} \dim_\R \mathcal{O}_{\min}$.

We also recall that for each 
complex simple Lie algebra $\mathfrak{l}_\C$, 
the minimal complex nilpotent (adjoint) orbit $\mathcal{O}^{\C}_{\min}$ in $\mathfrak{l}_\C$ is defined as the unique non-zero complex nilpotent orbit in $\mathfrak{l}_\C$ which is contained in the closure of any non-zero complex nilpotent orbit in $\mathfrak{l}_\C$.
Note that $\dim_\C \mathcal{O}^{\C}_{\min} \leq \dim_{\C} \mathcal{O}^\C$ holds for any non-zero complex nilpotent orbit $\mathcal{O}^\C$
and the equality holds only if $\mathcal{O}^\C = \mathcal{O}^{\C}_{\min}$.
Throughout this paper, 
we define $n(\mathfrak{l}_\C)$ as half of the complex dimension of $\mathcal{O}^{\C}_{\min}$.
Table \ref{table:nl_C} gives the formula of $n(\mathfrak{l}_\C)$ (see e.g.~\cite[Lemma 4.3.5 and Chapter 8.4]{CollingwoodMcGovern}).
Furthermore, for a complex semisimple Lie algebra $\mathfrak{l}'_\C$, 
we also put $n(\mathfrak{l}'_\C) := \sum_i n(\mathfrak{l}^i_\C)$, where 
$\mathfrak{l}'_\C = \bigoplus_i \mathfrak{l}^i_\C$ denotes the complex simple ideal decomposition of $\mathfrak{l}'_\C$.

    \begin{table}[!h]
    \caption{List of $n(\mathfrak{l}_\C)$ for complex simple $\mathfrak{l}_\C$}
    \begin{tabular}{c|ccccccccc}
        \hline
           $\mathfrak{l}_\C$ & $\mathfrak{sl}_{n}(\C)$ & $\mathfrak{so}_n(\C)$ & $\mathfrak{sp}_n(\C)$ & $\mathfrak{e}_{6}^\C$ & $\mathfrak{e}_{7}^\C$ & $\mathfrak{e}_{8}^\C$ & $\mathfrak{f}_{4}^\C$ & $\mathfrak{g}_{2}^\C$ \\
        \hline
        \hline
           $n(\mathfrak{l}_\C)$ & $n-1$ & $n-3$ & $n$ & $11$ & $17$ & $29$ & $8$ & $3$ \\
        \hline
    \end{tabular}
    \label{table:nl_C}
    \end{table}

If our real Lie algebra $\mathfrak{g}$ is absolutely-simple, 
then its complexification $\mathfrak{g}_\C$ is complex simple and the closure of $\mathcal{O}^{\C}_{\min,\mathfrak{g}}$ contains the minimal complex nilpotent orbit $\mathcal{O}^{\C}_{\min}$ in $\mathfrak{g}_\C$,
and thus $n(\mathfrak{g}_\C) \leq m(\mathfrak{g})$.
Furthermore, 
if $\mathfrak{g}$ is not absolutely simple, 
for the complex structures $\{ \pm J \}$ on $\mathfrak{g}$, we have complex simple Lie algebras $(\mathfrak{g}, J)$ and $(\mathfrak{g}, -J)$. 
The complexification $\mathfrak{g}_\C$ of $\mathfrak{g}$ can be identified with $(\mathfrak{g},J) \oplus (\mathfrak{g},-J)$ by 
\[
\mathfrak{g}_\C \rightarrow (\mathfrak{g},J) \oplus (\mathfrak{g},-J), ~ X + \sqrt{-1} Y \mapsto (X + J Y,X - J Y).
\]
Let us denote by $\mathcal{O}_{\min}$ 
the unique minimal nilpotent orbit in $\mathfrak{g}$.
Then $\mathcal{O}^{\C}_{\min,\mathfrak{g}}$ can be identified with $\mathcal{O}_{\min} \times \mathcal{O}_{\min}$.
In particular, we obtain 
\[
n(\mathfrak{g}_\C) = n((\mathfrak{g},J)) + n((\mathfrak{g},-J)) = m(\mathfrak{g}).
\]

The situation $n(\mathfrak{g}_\C) < m(\mathfrak{g})$ occurs only if $\mathfrak{g}$ is absolutely-simple, and classified as below:

\begin{fact}[See also {\cite{Brylinski1998realminimalnilp}, \cite[Proposition 30]{KobayashiMRMR2022}, \cite[Corollary 5.9]{KobayashiOshimaY2015symmpair},  \cite[Proposition 4.1]{Okuda2015smallest}}]\label{fact:smallestnilpcomp}
The following conditions on a non-compact absolutely-simple Lie algebra $\mathfrak{g}$ are equivalent:
\begin{enumerate}
    \item $n(\mathfrak{g}_\C) < m(\mathfrak{g})$.
    \item $\mathcal{O}^{\C}_{\min} \cap \mathfrak{g} = \emptyset$.
    \item $\mathcal{O}^{\C}_{\min} \neq \mathcal{O}^{\C}_{\min,\mathfrak{g}}$.
    \item $\mathfrak{g}$ is isomorphic to one of the Lie algebras in Table \ref{table:mingneqmin}.
\end{enumerate}
Furthermore, if $\mathfrak{g}$ satisfies the equivalent conditions above, then the symmetric pair $(\mathfrak{g},\mathfrak{k})$ is not of Hermitian-type, 
 and $\mathfrak{g}$ has a unique minimal real nilpotent orbit.
\end{fact}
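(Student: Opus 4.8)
The plan is to separate the statement into the ``soft'' equivalences among (i), (ii), (iii), which follow from general orbit geometry, and the ``hard'' identification with the explicit list (iv); throughout I use that $\mathfrak{g}$ is absolutely-simple, so that $\mathfrak{g}_\C$ is complex simple and $\mathcal{O}^{\C}_{\min}$ is defined.

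For (i)$\Leftrightarrow$(iii) I would argue purely by dimension. By definition $2n(\mathfrak{g}_\C) = \dim_\C \mathcal{O}^{\C}_{\min}$ and $2m(\mathfrak{g}) = \dim_\C \mathcal{O}^{\C}_{\min,\mathfrak{g}}$. Since $\mathcal{O}^{\C}_{\min,\mathfrak{g}}$ is a non-zero complex nilpotent orbit, the minimality property recalled above ($\dim_\C \mathcal{O}^{\C}_{\min} \le \dim_\C \mathcal{O}^{\C}$ for every non-zero $\mathcal{O}^{\C}$, with equality only if $\mathcal{O}^{\C} = \mathcal{O}^{\C}_{\min}$) gives $n(\mathfrak{g}_\C) \le m(\mathfrak{g})$, and this inequality is strict exactly when $\mathcal{O}^{\C}_{\min,\mathfrak{g}} \neq \mathcal{O}^{\C}_{\min}$. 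This is precisely (i)$\Leftrightarrow$(iii).

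For (ii)$\Leftrightarrow$(iii) one implication is immediate: if $\mathcal{O}^{\C}_{\min} = \mathcal{O}^{\C}_{\min,\mathfrak{g}}$, then it contains the non-empty set $\mathcal{N}_{\min}(\mathfrak{g}) \subset \mathfrak{g}$, so $\mathcal{O}^{\C}_{\min} \cap \mathfrak{g} \neq \emptyset$. For the contrapositive of the other implication I would start from a non-zero $X \in \mathcal{O}^{\C}_{\min} \cap \mathfrak{g}$. Because the centralizer of $X$ in $\mathfrak{g}_\C$ is the complexification of its centralizer in $\mathfrak{g}$, the real orbit $G \cdot X$ and the complex orbit $\mathcal{O}^{\C}_{\min}$ through $X$ satisfy $\dim_\R (G \cdot X) = \dim_\C \mathcal{O}^{\C}_{\min} = 2n(\mathfrak{g}_\C)$. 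As $G \cdot X$ is a non-zero real nilpotent orbit, minimality of the real orbits gives $2m(\mathfrak{g}) \le \dim_\R(G \cdot X)$, hence $m(\mathfrak{g}) \le n(\mathfrak{g}_\C)$; together with the reverse inequality this forces $m(\mathfrak{g}) = n(\mathfrak{g}_\C)$, which by the first equivalence means $\mathcal{O}^{\C}_{\min} = \mathcal{O}^{\C}_{\min,\mathfrak{g}}$. Thus (i), (ii) and (iii) are all equivalent.

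The remaining task is to match these conditions with the explicit list, and I expect this to be the main obstacle, since it rests on case-by-case data rather than a single argument. Feeding the formula for $n(\mathfrak{g}_\C)$ from Table \ref{table:nl_C} and the known values of $m(\mathfrak{g})$ (equivalently, the dimensions of the minimal real nilpotent orbits determined in the cited references) into the Cartan classification of non-compact absolutely-simple real forms, I would simply record those real forms for which $n(\mathfrak{g}_\C) < m(\mathfrak{g})$; this finite comparison is what produces Table \ref{table:mingneqmin} and yields (i)$\Leftrightarrow$(iv). For the concluding assertions I would note that Hermitian type is incompatible with the conditions: if $(\mathfrak{g},\mathfrak{k})$ is of Hermitian type, then the two minimal real orbits furnished by the Fact above have real dimension $2n(\mathfrak{g}_\C)$, so the complex orbit through any of their points is forced (again by minimality-uniqueness) to be $\mathcal{O}^{\C}_{\min}$; hence $\mathcal{O}^{\C}_{\min}$ meets $\mathfrak{g}$ and (ii) fails. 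Consequently the conditions force $(\mathfrak{g},\mathfrak{k})$ to be non-Hermitian, and then the Fact above yields that the minimal real nilpotent orbit is unique. Both assertions can alternatively be read off directly from the entries of Table \ref{table:mingneqmin}.
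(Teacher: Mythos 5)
The paper states this result as a \emph{Fact} with citations and gives no internal proof, so your proposal is in effect a reconstruction of the arguments in the cited literature. Your treatment of (i)$\Leftrightarrow$(ii)$\Leftrightarrow$(iii) is correct and self-contained: the inequality $n(\mathfrak{g}_\C) \leq m(\mathfrak{g})$ with equality precisely when $\mathcal{O}^{\C}_{\min} = \mathcal{O}^{\C}_{\min,\mathfrak{g}}$, and the centralizer-complexification argument giving $\dim_\R (G \cdot X) = \dim_\C (G_\C \cdot X)$ for $X \in \mathcal{O}^{\C}_{\min} \cap \mathfrak{g}$, are exactly the standard arguments, and they use only facts the paper makes available (in particular $m(\mathfrak{g}) = \frac{1}{2}\dim_\R \mathcal{O}_{\min}$ for any minimal real orbit). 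Deferring (iv) to a finite comparison over the Cartan classification is also legitimate; that is precisely what the citations supply, although the cited source \cite{Okuda2015smallest} carries out the check via weighted Dynkin diagrams matched against Satake diagrams (the same mechanism invoked in the paper's proof of Proposition \ref{proposition:key}) rather than by tabulating $m(\mathfrak{g})$ directly — both are case-by-case verifications of the same finite list.

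The one genuine flaw is your primary argument for the concluding assertions. You assert that in the Hermitian case ``the two minimal real orbits furnished by the Fact above have real dimension $2n(\mathfrak{g}_\C)$.'' The unnumbered Fact at the start of Section \ref{section:minimalreal} only gives the \emph{number} of minimal real nilpotent orbits when $(\mathfrak{g},\mathfrak{k})$ is Hermitian; it says nothing about their dimension. The dimension claim is exactly the statement $m(\mathfrak{g}) = n(\mathfrak{g}_\C)$, i.e.\ the negation of (i), which by your own equivalences is equivalent to the negation of (ii) — the very conclusion you are trying to reach. So as written, this sub-argument is circular (the underlying assertion ``Hermitian $\Rightarrow$ $\mathcal{O}^{\C}_{\min}$ meets $\mathfrak{g}$'' is true, but it requires a genuine argument, e.g.\ via the structure of $\mathfrak{p}^{\pm}$ or the Satake-diagram criterion, not the Fact you cite). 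Your fallback sentence does rescue the conclusion: once (iv) is established, non-Hermitian-ness is read off from Table \ref{table:mingneqmin} (each maximal compact factor there — $\mathfrak{sp}_n$, $\mathfrak{so}_{n-1}$, $\mathfrak{sp}_m \oplus \mathfrak{sp}_n$, $\mathfrak{f}_4$, $\mathfrak{so}_9$ — has trivial center), and uniqueness of the minimal real orbit then follows from part (ii) of the Section \ref{section:minimalreal} Fact. The circular argument should be excised and only the table-based deduction retained.
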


    \begin{table}[h]
    \centering
    \caption{List of $\mathfrak{g}$ with $n(\mathfrak{g}_\C) < m(\mathfrak{g})$}
        \label{table:mingneqmin}
    \begin{tabular}{c|c|c|c}
        \hline
           $\mathfrak{g}$ & $m(\mathfrak{g})$ & $(\mathfrak{g}_\C,\mathfrak{k}_\C)$ & $n(\mathfrak{g}_\C)$ \\
        \hline
        \hline
            $\mathfrak{su}^*_{2n}$ $(n \geq 2)$& $4n-4$ & $(\mathfrak{sl}_{2n}(\C),\mathfrak{sp}_n(\C))$ & $2n-1$ \\
            $\mathfrak{so}_{n-1,1}$ $(n \geq 5)$ & $n-2$ & $(\mathfrak{so}_{n}(\C),\mathfrak{so}_{n-1}(\C))$ & $n-3$ \\
            $\mathfrak{sp}_{m,n}$ ($m,n \geq 1$) & $2(m+n)-1$ & $(\mathfrak{sp}_{m+n}(\C),\mathfrak{sp}_m(\C) \oplus \mathfrak{sp}_{n}(\C))$ & $m+n$ \\
            $\mathfrak{e}_{6(-26)}$ & $16$ & $(\mathfrak{e}_{6}^\C,\mathfrak{f}_4^\C)$ & $11$ \\
            $\mathfrak{f}_{4(-20)}$ & $11$ & $(\mathfrak{f}_{4}^\C,\mathfrak{so}_9(\C))$ & $8$ \\
        \hline
    \end{tabular}
    \end{table}

The following observation will be applied in the next section:

\begin{proposition}\label{proposition:key}
Suppose that $n(\mathfrak{g}_\C) < m(\mathfrak{g})$.
Let $\mathfrak{g}'$ be a non-compact real form of $\mathfrak{g}_\C$.
Then 
$\mathcal{O}^{\C}_{\min,\mathfrak{g}} \cap \mathfrak{g}' = \emptyset$ 
if and only if 
$(\mathfrak{g},\mathfrak{g}')$ is isomorphic to $(\mathfrak{su}^*_{2n},\mathfrak{su}_{2n-1,1})$ ($n \geq 2$) or $(\mathfrak{so}_{2n-1,1},\mathfrak{so}^*_{2n})$ ($n \geq 4$).
\end{proposition}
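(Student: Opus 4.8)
The plan is to reduce to a finite list and, in each case, identify the complex orbit $\mathcal{O}^{\C}_{\min,\mathfrak{g}}$ explicitly and then test it against every non-compact real form of $\mathfrak{g}_\C$. By Fact~\ref{fact:smallestnilpcomp}, the hypothesis $n(\mathfrak{g}_\C)<m(\mathfrak{g})$ forces $\mathfrak{g}$ to be one of the five Lie algebras of Table~\ref{table:mingneqmin}, so it suffices to handle these one at a time. First I would pin down $\mathcal{O}^{\C}_{\min,\mathfrak{g}}$ by matching the half-dimension $m(\mathfrak{g})$ from the table against the dimension formulas for nilpotent orbits. For the classical entries this singles out a partition: $[2^2,1^{2n-4}]$ in $\mathfrak{sl}_{2n}(\C)$ for $\mathfrak{su}^*_{2n}$; $[3,1^{2n-3}]$ in $\mathfrak{so}_{2n}(\C)$ for the even-dimensional members $\mathfrak{so}_{2n-1,1}$ of the orthogonal family; and $[2^2,1^{2k-4}]$ in $\mathfrak{sp}_{2k}(\C)$, with $k=m+n$, for $\mathfrak{sp}_{m,n}$. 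For the two exceptional entries the same count identifies $\mathcal{O}^{\C}_{\min,\mathfrak{e}_{6(-26)}}$ with the orbit $2A_1$ in $\mathfrak{e}_6^\C$ and $\mathcal{O}^{\C}_{\min,\mathfrak{f}_{4(-20)}}$ with the short-root orbit $\widetilde{A}_1$ in $\mathfrak{f}_4^\C$.

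Next I would determine, for each real form $\mathfrak{g}'$ of $\mathfrak{g}_\C$, whether the orbit meets $\mathfrak{g}'$, using the description of real nilpotent orbits by signed Young diagrams for the classical forms (see~\cite{CollingwoodMcGovern}). Two different mechanisms yield the two exceptional pairs. For $\mathfrak{su}^*_{2n}$ the shape $[2^2,1^{2n-4}]$ meets both $\mathfrak{sl}_{2n}(\R)$ and $\mathfrak{su}^*_{2n}$ (every part has even multiplicity), whereas meeting $\mathfrak{su}(p,q)$ amounts to realizing this shape by a signed Young diagram of signature $(p,q)$; as each of the two rows of length $2$ contributes exactly one $+$ and one $-$, such a diagram exists precisely when $p,q\geq 2$, so the unique obstruction among the non-compact forms is $\{p,q\}=\{1,2n-1\}$, that is, $\mathfrak{su}_{2n-1,1}$. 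For $\mathfrak{so}_{2n-1,1}$ the shape $[3,1^{2n-3}]$ admits a signed Young diagram of any signature $(p,q)$ with $p,q\geq 1$, so it meets every non-compact $\mathfrak{so}_{p,q}$; but meeting $\mathfrak{so}^*_{2n}$ requires every odd part to occur with even multiplicity, and here the part $3$ occurs once, so the orbit misses $\mathfrak{so}^*_{2n}$. (For the odd-dimensional members of the family there is no $\mathfrak{so}^*$ form and every $\mathfrak{so}_{p,q}$ is met, so they contribute no exception.)

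It then remains to rule out further exceptions by showing the orbit meets every non-compact real form in the three remaining entries. For $\mathfrak{sp}_{m,n}$ I would avoid a signature computation altogether: since $m(\mathfrak{sp}_{p,q})=2k-1$ depends only on $k=p+q$, the orbit $\mathcal{O}^{\C}_{\min,\mathfrak{sp}_{p,q}}$ equals $[2^2,1^{2k-4}]$ for every $p,q\geq 1$ with $p+q=k$, and hence, being by definition the complex orbit through the minimal real orbit of $\mathfrak{sp}(p,q)$, it meets $\mathfrak{sp}(p,q)$ for all such $p,q$; it also meets the split form $\mathfrak{sp}_{2k}(\R)$, so no new exception arises. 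For the exceptional entries I would invoke the fact that a split real form meets every complex nilpotent orbit, so $\widetilde{A}_1$ meets $\mathfrak{f}_{4(4)}$ and $2A_1$ meets $\mathfrak{e}_{6(6)}$; together with the tautological meeting of $\mathfrak{g}$ itself this disposes of $\mathfrak{f}_{4(-20)}$, whose only non-compact real forms are $\mathfrak{f}_{4(4)}$ and $\mathfrak{f}_{4(-20)}$, and reduces $\mathfrak{e}_{6(-26)}$ to verifying that $2A_1$ meets $\mathfrak{e}_{6(2)}$ and $\mathfrak{e}_{6(-14)}$, which I would read off from the classification of real nilpotent orbits in the real forms of $\mathfrak{e}_6^\C$.

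The main obstacle I anticipate is the bookkeeping in the classical signature analysis: fixing the conventions that relate a signed Young diagram to its signature $(p,q)$ and to the constraints imposed by each real form (alternating signs in type A, even multiplicity of odd parts for $\mathfrak{so}^*_{2n}$ and $\mathfrak{sp}(p,q)$), so as to be certain that the only obstructions are the lopsided-signature one for $\mathfrak{su}_{2n-1,1}$ and the parity one for $\mathfrak{so}^*_{2n}$. The secondary difficulty is the reliance on tables for the two forms $\mathfrak{e}_{6(2)}$ and $\mathfrak{e}_{6(-14)}$; by contrast the $\mathfrak{sp}_{m,n}$ argument and the split-form argument are essentially free. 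Assembling all cases, the only non-compact real forms $\mathfrak{g}'$ missed by $\mathcal{O}^{\C}_{\min,\mathfrak{g}}$ are $\mathfrak{su}_{2n-1,1}$ for $\mathfrak{g}=\mathfrak{su}^*_{2n}$ and $\mathfrak{so}^*_{2n}$ for $\mathfrak{g}=\mathfrak{so}_{2n-1,1}$, which is the assertion of the proposition.
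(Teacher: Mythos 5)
Your proposal reaches the correct conclusion, and it is a genuinely different proof from the paper's. The paper performs the same reduction to the five families of Table \ref{table:mingneqmin}, but then invokes a single uniform criterion from \cite{Okuda13} and \cite{Okuda2015smallest}: an orbit meets a real form if and only if its weighted Dynkin diagram satisfies the matching condition against the Satake diagram of that form, so the whole proof is a comparison of two existing tables. You instead identify $\mathcal{O}^{\C}_{\min,\mathfrak{g}}$ concretely (the partitions $[2^2,1^{2n-4}]$, $[3,1^{n-3}]$, $[2^2,1^{2k-4}]$ and the labels $2A_1$, $\widetilde{A}_1$) and test each real form by hand via the signed-Young-diagram classification of \cite{CollingwoodMcGovern}, supplemented by the split-form fact, the tautological intersection with $\mathfrak{g}$ itself, and your signature-independence trick for $\mathfrak{sp}_{m,n}$. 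What this buys is elementariness and self-containedness in the classical cases; what the paper's method buys is uniformity (exceptional and classical cases are treated identically) and automatic bookkeeping of how real forms and orbits are actually embedded.

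That bookkeeping is the one place where your write-up needs an explicit sentence, namely $\mathfrak{g}=\mathfrak{so}_{7,1}$ (the case $n=4$ of the second family). There, identification of the orbit ``by matching the half-dimension'' is ambiguous: $\mathfrak{so}_8(\C)$ has three nilpotent orbits of dimension $12$, namely $[3,1^5]$ and the two very even orbits $[2^4]_{I}$, $[2^4]_{II}$, and triality permutes them. One must instead say that the complexification of a real orbit of shape $[3,1^5]$ is the complex orbit of the same shape, which is how your signed-diagram calculus actually proceeds. Relatedly, your dichotomy ``$[3,1^5]$ meets every $\mathfrak{so}_{p,q}$ but misses $\mathfrak{so}^*_8$'' crosses the abstract isomorphism $\mathfrak{so}_{6,2}\cong\mathfrak{so}^*_8$: the two sides refer to the standard embeddings, which are not conjugate under $\Int(\mathfrak{so}_8(\C))$, so at $n=4$ the ``if and only if'' of the proposition must be read as a statement about the pair $(\mathfrak{g},\mathfrak{g}')$ up to automorphism of $\mathfrak{g}_\C$, with standard models on the right-hand side. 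Your argument is correct once this convention is stated (and the paper's own proof leaves the same convention implicit); a second minor polish is that the $\mathfrak{sp}_{m,n}$ shortcut tacitly uses uniqueness of the orbit of dimension $4k-2$, which can be avoided by noting directly that a quaternionic rank-one nilpotent in $\mathfrak{sp}(p,q)$ complexifies to $[2^2,1^{2k-4}]$ for every signature.
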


\begin{proof}[Proof of Proposition \ref{proposition:key}]
As stated in \cite[Section 4.1]{Okuda2015smallest} or \cite[Proposition 7.8 and Theorem 7.10]{Okuda13}, 
a complex nilpotent orbit $\mathcal{O}^\C$ in $\mathfrak{g}_\C$ meets a fixed real form $\mathfrak{g}'$ if and only if 
the weighted Dynkin diagram of $\mathcal{O}^\C$ 
satisfies the matching condition to the Satake diagram $S_{\mathfrak{g}'}$ of $\mathfrak{g}'$.
For the cases where $n(\mathfrak{g}_\C) < m(\mathfrak{g})$, or equivalently, $\mathcal{O}^{\C}_{\min,\mathfrak{g}} \neq \mathcal{O}^{\C}_{\min}$, 
the list of weighted Dynkin diagrams of $\mathcal{O}^{\C}_{\min,\mathfrak{g}}$ can be found in \cite[Table 1]{Okuda2015smallest}.
By comparing it with the list of Satake diagrams of absolutely simple Lie algebras (see e.g.~\cite{Araki62} or \cite[Chapter X]{Helgason2001bookDLSS}), 
one can easily obtain the claim in Proposition \ref{proposition:key}.
\end{proof}

\section{Classifications}\label{section:classification}

In this section, we classify symmetric pairs $(\mathfrak{g},\mathfrak{h})$ with $\mathcal{O}^{\C}_{\min,\mathfrak{g}} \cap \mathfrak{g}^d = \emptyset$.

\subsection{In the case where $\mathfrak{g}$ is absolutely-simple}

Let $\mathfrak{g}$ be a non-compact absolutely simple Lie algebra over $\R$, and 
$(\mathfrak{g},\mathfrak{h})$ a symmetric pair.

One of the main results of this paper is given below:

\begin{theorem}\label{theorem:main1}
Suppose that $n(\mathfrak{g}_\C) < m(\mathfrak{g})$. Then $\mathcal{O}^{\C}_{\min,\mathfrak{g}} \cap \mathfrak{g}^d \neq \emptyset$.
\end{theorem}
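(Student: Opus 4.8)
The plan is to reduce the statement to the five explicit cases listed in Table \ref{table:mingneqmin} and then, for each, exhibit a symmetric subalgebra $\mathfrak{g}'$ (playing the role of $\mathfrak{g}^d$) that the orbit $\mathcal{O}^{\C}_{\min,\mathfrak{g}}$ genuinely meets. By Fact \ref{fact:smallestnilpcomp}, the hypothesis $n(\mathfrak{g}_\C) < m(\mathfrak{g})$ forces $\mathfrak{g}$ to be one of $\mathfrak{su}^*_{2n}$, $\mathfrak{so}_{n-1,1}$, $\mathfrak{sp}_{m,n}$, $\mathfrak{e}_{6(-26)}$, or $\mathfrak{f}_{4(-20)}$. **First I would** recall that the dual Lie algebra $\mathfrak{g}^d$ attached to a symmetric pair $(\mathfrak{g},\mathfrak{h})$ is always a real form of $\mathfrak{g}_\C$, so Proposition \ref{proposition:key} applies: the orbit $\mathcal{O}^{\C}_{\min,\mathfrak{g}}$ fails to meet $\mathfrak{g}^d$ \emph{if and only if} $(\mathfrak{g},\mathfrak{g}^d)$ is one of the two exceptional pairs $(\mathfrak{su}^*_{2n},\mathfrak{su}_{2n-1,1})$ or $(\mathfrak{so}_{2n-1,1},\mathfrak{so}^*_{2n})$.

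**So the crux** is purely a statement about which real forms $\mathfrak{g}^d$ can arise as the dual of a symmetric pair whose isotropy algebra sits inside a fixed $\mathfrak{g}$: I must verify that neither $\mathfrak{su}_{2n-1,1}$ (as the dual of some involution of $\mathfrak{su}^*_{2n}$) nor $\mathfrak{so}^*_{2n}$ (as the dual of some involution of $\mathfrak{so}_{2n-1,1}$) actually occurs. **The key step** is therefore to enumerate, for each of the five $\mathfrak{g}$ in Table \ref{table:mingneqmin}, the involutive automorphisms $\sigma$ up to conjugation and compute the resulting $\mathfrak{g}^d = (\mathfrak{k}\cap\mathfrak{h}) + \sqrt{-1}(\mathfrak{k}\cap\mathfrak{q}) + \sqrt{-1}(\mathfrak{p}\cap\mathfrak{h}) + (\mathfrak{p}\cap\mathfrak{q})$, reading off its isomorphism class as a real form. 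This is a finite bookkeeping task driven by Berger's classification of symmetric pairs of the simple real Lie algebras in the table; for each such pair one reads $\mathfrak{g}^d$ from the associated symmetric pair and checks it against the two forbidden targets.

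**The main obstacle** I expect is the case $\mathfrak{g} = \mathfrak{so}_{n-1,1}$: here the symmetric subalgebras $\mathfrak{h}$ are the $\mathfrak{so}_{p,1} + \mathfrak{so}_{n-1-p}$ (and $\mathfrak{so}_{n-1} + \mathfrak{so}_{0,1}$) families, and one must confirm that the dual $\mathfrak{g}^d$ is always of the form $\mathfrak{so}_{a,b}$ with $b \geq 2$ (a split or pseudo-Riemannian orthogonal form) rather than the non-split $\mathfrak{so}^*_{2n}$, which can only emerge from the single specific involution corresponding to a compatible complex structure — and that involution does not preserve the $\mathfrak{so}_{n-1,1}$ real structure for $n-1$ odd, i.e.\ $n$ even, while $\mathfrak{so}^*_{2n}$ requires $n-1$ even. **The resolution** is a parity or signature obstruction: tracking the signature of the induced form on $\mathfrak{g}^d$ shows it never lands on $\mathfrak{so}^*_{2n}$, and the analogous signature/quaternionic-structure count for $\mathfrak{su}^*_{2n}$ rules out $\mathfrak{su}_{2n-1,1}$. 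Once these two exclusions are established, Proposition \ref{proposition:key} immediately yields $\mathcal{O}^{\C}_{\min,\mathfrak{g}} \cap \mathfrak{g}^d \neq \emptyset$ in every case, proving the theorem.
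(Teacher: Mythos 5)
Your proposal is correct and takes essentially the same approach as the paper: both reduce the theorem, via Proposition \ref{proposition:key}, to checking that $(\mathfrak{g},\mathfrak{g}^d)$ is never isomorphic to $(\mathfrak{su}^*_{2n},\mathfrak{su}_{2n-1,1})$ or $(\mathfrak{so}_{2n-1,1},\mathfrak{so}^*_{2n})$, and then settle this by a finite inspection of which real forms actually occur as duals. The only difference is sourcing: the paper reads the possible $\mathfrak{g}^d$ off the tables in Oshima--Sekiguchi \cite{OshimaSekiguchi84}, whereas you propose to recompute them from Berger's classification of involutions via a signature/maximal-compact-subalgebra count, which amounts to the same verification.
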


\begin{proof}[Proof of Theorem \ref{theorem:main1}]
Let $(\mathfrak{g},\mathfrak{h})$ be a symmetric pair with absolutely simple $\mathfrak{g}$.
By Proposition \ref{proposition:key}, 
we only need to check that 
the pair $(\mathfrak{g},\mathfrak{g}^d)$ is not isomorphic to 
$(\mathfrak{su}^*_{2n},\mathfrak{su}_{2n-1,1})$ ($n \geq 2$) nor $(\mathfrak{so}_{2n-1,1},\mathfrak{so}^*_{2n})$ ($n \geq 4$).
By the tables in \cite[Section 1]{OshimaSekiguchi84}, 
one can see the following:
\begin{itemize}
    \item If $\mathfrak{g} \simeq \mathfrak{su}^*_{2n}$ $(n \geq 2)$, then $\mathfrak{g}^d$ is isomorphic to $\mathfrak{su}^*_{2n}$, $\mathfrak{su}_{2n-2j,2j}$, $\mathfrak{sl}_{2n}(\R)$ or $\mathfrak{su}_{n,n}$.
    \item If $\mathfrak{g}^d \simeq \mathfrak{so}^*_{2n}$ ($n \geq 4$),
    then $\mathfrak{g}$ is isomorphic to $\mathfrak{so}^*_{2n}$, $\mathfrak{so}_{n,n}$ or $\mathfrak{so}_{2n-2j,2j}$.
\end{itemize}
These complete the proof.
\end{proof}

By combining Theorem \ref{theorem:main1} with Fact \ref{fact:smallestnilpcomp} and tables of dual pairs of symmetric pairs in \cite{OshimaSekiguchi84}, 
we have the following classification result:

\begin{theorem}\label{theorem:main2}
Let $(\mathfrak{g},\mathfrak{h})$ be a symmetric pair such that $\mathfrak{g}$ is absolutely-simple.
Then the five conditions on $(\mathfrak{g},\mathfrak{h})$ below are equivalent:
\begin{enumerate}
    \item $\mathcal{O}^{\C}_{\min,\mathfrak{g}} \cap \mathfrak{g}^d = \emptyset$.
    \item $m(\mathfrak{g}) = n(\mathfrak{g}_\C) < m(\mathfrak{g}^d)$.
    \item $\mathcal{O}^{\C}_{\min} = \mathcal{O}^{\C}_{\min,\mathfrak{g}}$ but $\mathcal{O}^{\C}_{\min} \neq \mathcal{O}^{\C}_{\min,\mathfrak{g}^d}$.
    \item The simple Lie algebra $\mathfrak{g}^d$ can be found in Table \ref{table:mingneqmin} but $\mathfrak{g}$ can not.
    \item The symmetric pair $(\mathfrak{g},\mathfrak{h})$ is isomorphic to one of the symmetric pairs in Table \ref{table:gabssimple}. 
\end{enumerate}
\end{theorem}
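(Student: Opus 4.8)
The plan is to collapse all five conditions onto the single combinatorial condition (iv), and then to settle the equivalence (iv) $\Leftrightarrow$ (v) by direct inspection of the classification of symmetric pairs. First I would record two structural remarks that let me use Fact \ref{fact:smallestnilpcomp} for both real forms at once. Since $\mathfrak{g}$ is absolutely-simple, its complexification $\mathfrak{g}_\C$ is complex simple, and $\mathfrak{g}^d$, being a real form of $\mathfrak{g}_\C$, is again absolutely-simple with $(\mathfrak{g}^d)_\C = \mathfrak{g}_\C$; in particular the quantity $n(\mathfrak{g}_\C)$ and the minimal complex orbit $\mathcal{O}^\C_{\min}$ are common to $\mathfrak{g}$ and $\mathfrak{g}^d$. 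Moreover, for a genuine symmetric pair one has $\mathfrak{q} \neq 0$, so $\mathfrak{p}^d = \sqrt{-1}(\mathfrak{k} \cap \mathfrak{q}) + (\mathfrak{p} \cap \mathfrak{q}) \neq 0$ and hence $\mathfrak{g}^d$ is non-compact. Consequently Fact \ref{fact:smallestnilpcomp} applies verbatim to each of $\mathfrak{g}$ and $\mathfrak{g}^d$.

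Next I would prove (ii) $\Leftrightarrow$ (iii) $\Leftrightarrow$ (iv) by reading off Fact \ref{fact:smallestnilpcomp} clause by clause. Applying the Fact to $\mathfrak{g}$, the condition $m(\mathfrak{g}) = n(\mathfrak{g}_\C)$ appearing in (ii) is equivalent to $\mathcal{O}^\C_{\min} = \mathcal{O}^\C_{\min,\mathfrak{g}}$ and to ``$\mathfrak{g}$ cannot be found in Table \ref{table:mingneqmin}''. Applying the Fact to the non-compact real form $\mathfrak{g}^d$, the condition $n(\mathfrak{g}_\C) < m(\mathfrak{g}^d)$ in (ii) is equivalent to $\mathcal{O}^\C_{\min} \neq \mathcal{O}^\C_{\min,\mathfrak{g}^d}$ and to ``$\mathfrak{g}^d$ can be found in Table \ref{table:mingneqmin}''. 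Matching these two clauses termwise identifies the statements (ii), (iii), (iv) with one another.

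Then I would establish (i) $\Leftrightarrow$ (iv) by splitting on whether $\mathfrak{g}$ lies in Table \ref{table:mingneqmin}. If $\mathfrak{g}$ is in the table, then $n(\mathfrak{g}_\C) < m(\mathfrak{g})$, so Theorem \ref{theorem:main1} yields $\mathcal{O}^\C_{\min,\mathfrak{g}} \cap \mathfrak{g}^d \neq \emptyset$; thus (i) fails, and (iv) fails as well since its first clause is violated. If $\mathfrak{g}$ is not in the table, then $\mathcal{O}^\C_{\min,\mathfrak{g}} = \mathcal{O}^\C_{\min}$, so (i) becomes $\mathcal{O}^\C_{\min} \cap \mathfrak{g}^d = \emptyset$, which by Fact \ref{fact:smallestnilpcomp} for $\mathfrak{g}^d$ is equivalent to ``$\mathfrak{g}^d$ is in Table \ref{table:mingneqmin}''; since ``$\mathfrak{g}$ is not in the table'' already holds in this case, this is precisely (iv). Hence (i) $\Leftrightarrow$ (iv) in both cases.

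Finally, for (iv) $\Leftrightarrow$ (v) I would run through the tables of dual Lie algebras of symmetric pairs in \cite{OshimaSekiguchi84}: collect every symmetric pair $(\mathfrak{g},\mathfrak{h})$ with absolutely-simple $\mathfrak{g}$ whose dual $\mathfrak{g}^d$ is one of the five entries $\mathfrak{su}^*_{2n}$, $\mathfrak{so}_{n-1,1}$, $\mathfrak{sp}_{m,n}$, $\mathfrak{e}_{6(-26)}$, $\mathfrak{f}_{4(-20)}$ of Table \ref{table:mingneqmin}, and discard those for which $\mathfrak{g}$ itself already appears in that table; the surviving pairs are exactly the entries of Table \ref{table:gabssimple}. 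I expect this last step to be the main obstacle, since it is the only non-formal part: it amounts to a finite but lengthy bookkeeping against the Oshima--Sekiguchi list, where care is needed with low-rank coincidences and with the several distinct real forms that can share a given complexification $\mathfrak{g}_\C$.
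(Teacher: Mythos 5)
Your proposal is correct and takes essentially the same route as the paper: the paper's (very terse) proof is exactly the combination of Theorem \ref{theorem:main1}, Fact \ref{fact:smallestnilpcomp} applied to both $\mathfrak{g}$ and its non-compact absolutely-simple dual $\mathfrak{g}^d$ (which share $\mathfrak{g}_\C$, hence $n(\mathfrak{g}_\C)$ and $\mathcal{O}^{\C}_{\min}$), and the Oshima--Sekiguchi tables of dual pairs for the final matching with Table \ref{table:gabssimple}. Your termwise identification of (ii), (iii), (iv) and the case split giving (i) $\Leftrightarrow$ (iv) are a faithful unpacking of that one-line argument.
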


    \begin{table}[h]
    \centering
    \caption{List of $(\mathfrak{g},\mathfrak{h})$ with absolutely simple $\mathfrak{g}$ such that $\mathcal{O}^{\C}_{\min,\mathfrak{g}} \cap \mathfrak{g}^d = \emptyset$}
        \label{table:gabssimple}
    \begin{tabular}{c|c|c}
        \hline
           $(\mathfrak{g},\mathfrak{h})$ &  $\mathfrak{g}^d$ & $\mathfrak{h}^a = \mathfrak{g} \cap \mathfrak{g}^d$ \\
        \hline
        \hline
        $(\mathfrak{sl}_{2n}(\R),\mathfrak{sp}_{n}(\R))$ $(n \geq 2)$ & $\mathfrak{su}^*_{2n}$ & $\mathfrak{sl}_{n}(\C) \oplus \mathfrak{so}_2$ \\
        $(\mathfrak{su}_{2n-2j,2j},\mathfrak{sp}_{n-j,j})$ $(n \geq 2)$ & $\mathfrak{su}^*_{2n}$ & $\mathfrak{sp}_{n-j,j}$ \\
        $(\mathfrak{su}_{n,n},\mathfrak{sp}_{n}(\R))$ $(n \geq 2)$ & $\mathfrak{su}^*_{2n}$ & $\mathfrak{so}^*_{2n}$  \\
        $(\mathfrak{so}_{m,n},\mathfrak{so}_{m-1,n})$ ($m,n \geq 2$ \text{ with } $(m,n) \neq (2,2)$) & $\mathfrak{so}_{m+n-1,1}$ & $\mathfrak{so}_{1,n} \oplus \mathfrak{so}_{m-1}$ \\
        $(\mathfrak{sp}_{n}(\R),\mathfrak{sp}_{n-j}(\R) \oplus \mathfrak{sp}_j(\R))$ ($n \geq 2$, $n-1 \geq j \geq 1$)& $\mathfrak{sp}_{n-j,j}$ & $\mathfrak{su}_{n-j,j} \oplus \mathfrak{so}_2$ \\
        $(\mathfrak{sp}_{2n}(\R),\mathfrak{sp}_{n}(\C))$ $(n \geq 1)$ & $\mathfrak{sp}_{n,n}$ & $\mathfrak{sp}_{n}(\C)$  \\
        $(\mathfrak{e}_{6(6)},\mathfrak{f}_{4(4)})$ & $\mathfrak{e}_{6(-26)}$ & $\mathfrak{su}^*_6 \oplus \mathfrak{su}_2$ \\
        $(\mathfrak{e}_{6(2)},\mathfrak{f}_{4(4)})$ & $\mathfrak{e}_{6(-26)}$ & $\mathfrak{sp}_{3,1}$ \\
        $(\mathfrak{e}_{6(-14)},\mathfrak{f}_{4(-20)})$ & $\mathfrak{e}_{6(-26)}$ & $\mathfrak{f}_{4(-20)}$ \\
        $(\mathfrak{f}_{4(4)},\mathfrak{so}_{5,4})$ & $\mathfrak{f}_{4(-20)}$ & $\mathfrak{sp}_{2,1} \oplus \mathfrak{su}_2$ \\  
        \hline
    \end{tabular}
    \end{table}

\begin{remark}
In Table \ref{table:gabssimple}, 
the Riemannian symmetric pair $(\mathfrak{g},\mathfrak{k})$ is of Hermitian type
if and only if $\mathfrak{g}$ is isomorphic to 
$\mathfrak{su}_{2n-2j,2j}$,  $\mathfrak{su}_{n,n}$, $\mathfrak{so}_{m,n}$ with $\min\{ m,n \} = 2$, $\mathfrak{sp}_n(\R)$, $\mathfrak{sp}_{2n}(\R)$ or $\mathfrak{e}_{6(-14)}$. 
\end{remark}

\subsection{In the case where $\mathfrak{g}$ is not absolutely simple}

Let $\mathfrak{g}$ be a non-compact simple Lie algebra over $\R$ and 
$(\mathfrak{g},\mathfrak{h})$ a symmetric pair.
In this subsection, we assume that $\mathfrak{g}$ is not absolutely simple.
Then $\mathfrak{g}$ admits complex structures $\{ \pm J \}$ (see Section \ref{section:minimalreal}).
For the involution $\sigma$ on $\mathfrak{g}$ defining $\mathfrak{h}$, 
either one of the following two situations occurs:
\begin{itemize}
    \item $\sigma \circ J = (-J) \circ \sigma$.
    \item $\sigma \circ J = J \circ \sigma$.
\end{itemize}
We fix a Cartan involution $\theta$ on $\mathfrak{g}$ commuting with $\sigma$.
Then $\theta \circ J = (-J) \circ \theta$,
and the Cartan decomposition of $\mathfrak{g}$ corresponding to $\theta$ can be written as $\mathfrak{g} = \mathfrak{u} + J \mathfrak{u}$ 
by a compact real form $\mathfrak{u}$ of $(\mathfrak{g},\pm J)$.

Let us consider the case where $\sigma \circ J = (-J) \circ \sigma$, 
that is, $\sigma$ is anti-$\C$-linear on $(\mathfrak{g},\pm J)$.
Then the subalgebra $\mathfrak{h}$ is a real form of $(\mathfrak{g},\pm J)$, 
and the real form $\mathfrak{g}^d$ of $\mathfrak{g}_\C = (\mathfrak{g},J) \oplus (\mathfrak{g},-J)$ can be written as 
\[
\{ (X,\theta \sigma X) \in (\mathfrak{g},J) \oplus (\mathfrak{g},-J) \mid X \in \mathfrak{g} \}.
\]
In particular, the simple Lie algebra $\mathfrak{g}^d$ over $\R$ is isomorphic to $\mathfrak{g}$. 

Next, we shall suppose that $\sigma \circ J = J \circ \sigma$, that is, $\sigma$ is $\C$-linear on $(\mathfrak{g},\pm J)$.
Then $\mathfrak{h}$ is $\pm J$-stable.
In this situation, 
$\mathfrak{u} \cap \mathfrak{h}$ gives a real form of $(\mathfrak{h},\pm J)$.
We define the real form 
\[
\mathfrak{g}^d_0 := (\mathfrak{u} \cap \mathfrak{h}) + J (\mathfrak{u} \cap \mathfrak{q})
\]
of $(\mathfrak{g},\pm J)$.
Then the real form $\mathfrak{g}^d$ of $\mathfrak{g}_\C = (\mathfrak{g},J) \oplus (\mathfrak{g},-J)$ can be identified with $\mathfrak{g}^d_0 \oplus \mathfrak{g}^d_0$,
and $\mathfrak{h}$ can be understood as the complexification of the maximal compact subalgebra $\mathfrak{u} \cap \mathfrak{h}$ of $\mathfrak{g}^d_0$.

The theorem below gives a classification of $(\mathfrak{g},\mathfrak{h})$ with $\mathcal{O}^{\C}_{\min,\mathfrak{g}} \cap \mathfrak{g}^d = \emptyset$ in the setting above:

\begin{theorem}\label{theorem:main3}
Let $(\mathfrak{g},\mathfrak{h})$ be a symmetric pair with simple $\mathfrak{g}$ admitting two complex structures $\{ \pm J \}$.
\begin{enumerate}[(1)]
    \item \label{item:main3:antiholo} Suppose that $\sigma \circ J = (-J) \circ \sigma$. Then $m(\mathfrak{g}) = n(\mathfrak{g}_\C) = m(\mathfrak{g}^d)$, and $\mathcal{O}^{\C}_{\min,\mathfrak{g}} \cap \mathfrak{g}^d \neq \emptyset$.
    \item \label{item:main3:holo} Suppose that $\sigma \circ J = J \circ \sigma$.
        Then the following six conditions on $(\mathfrak{g},\mathfrak{h})$ are equivalent:
\begin{enumerate}[(i)]
    \item \label{item:main3hol:minCgd} $\mathcal{O}^{\C}_{\min,\mathfrak{g}} \cap \mathfrak{g}^d = \emptyset$. 
    \item \label{item:main3hol:nmm} $m(\mathfrak{g}) = n(\mathfrak{g}_\C) < m(\mathfrak{g}^d)$ where we put $m(\mathfrak{g}^d) = 2m(\mathfrak{g}^d_0)$.
    \item \label{item:main3hol:mind0} $\mathcal{O}_{\min} \cap \mathfrak{g}^d_0 = \emptyset$.
    \item \label{item:main3hol:nm0} $n((\mathfrak{g},\pm J)) < m(\mathfrak{g}^d_0)$.
    \item \label{item:main3hol:gd0list} $\mathfrak{g}^d_0$ is isomorphic to one of the Lie algebras in Table \ref{table:mingneqmin}.
    \item \label{item:main3hol:ghlist} The complex symmetric pair $(\mathfrak{g},\mathfrak{h})$ is isomorphic to one of the symmetric pairs in Table \ref{table:gcomplex}.
\end{enumerate}
\end{enumerate}
\end{theorem}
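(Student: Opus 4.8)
The plan is to exploit the product decompositions $\mathfrak{g}_\C=(\mathfrak{g},J)\oplus(\mathfrak{g},-J)$ and $\mathcal{O}^{\C}_{\min,\mathfrak{g}}=\mathcal{O}_{\min}\times\mathcal{O}_{\min}$ from Section~\ref{section:minimalreal}, together with the fact that the minimal orbit $\mathcal{O}_{\min}$, viewed as a subset of $\mathfrak{g}$, does not depend on the sign of $J$ and is preserved by every automorphism of the real Lie algebra $\mathfrak{g}$, being the unique nonzero nilpotent orbit of minimal dimension. In the holomorphic case I will further transport the whole question to the complex simple Lie algebra $(\mathfrak{g},\pm J)$ and its real form $\mathfrak{g}^d_0$, where Fact~\ref{fact:smallestnilpcomp} applies directly.

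For part~\eqref{item:main3:antiholo}, recall $\mathfrak{g}^d\cong\mathfrak{g}$, so $m(\mathfrak{g}^d)=m(\mathfrak{g})=n(\mathfrak{g}_\C)$, the last equality holding for every non-absolutely-simple $\mathfrak{g}$. For nonemptiness I would use $\mathfrak{g}^d=\{(X,\theta\sigma X)\mid X\in\mathfrak{g}\}$: since $\theta\sigma$ is an automorphism of the real Lie algebra $\mathfrak{g}$ it preserves $\mathcal{O}_{\min}$, so for any $X\in\mathcal{O}_{\min}$ the element $(X,\theta\sigma X)$ lies in $\mathcal{O}_{\min}\times\mathcal{O}_{\min}=\mathcal{O}^{\C}_{\min,\mathfrak{g}}$ and in $\mathfrak{g}^d$; hence $\mathcal{O}^{\C}_{\min,\mathfrak{g}}\cap\mathfrak{g}^d\neq\emptyset$.

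For part~\eqref{item:main3:holo} the core is a chain of equivalences. First I would check that, under $X+\sqrt{-1}Y\mapsto(X+JY,X-JY)$ and using $\mathfrak{k}=\mathfrak{u}$, $\mathfrak{p}=J\mathfrak{u}$ together with $\sigma J=J\sigma$, the real form $\mathfrak{g}^d=(\mathfrak{k}\cap\mathfrak{h})+\sqrt{-1}(\mathfrak{k}\cap\mathfrak{q})+\sqrt{-1}(\mathfrak{p}\cap\mathfrak{h})+(\mathfrak{p}\cap\mathfrak{q})$ decomposes as $\mathfrak{g}^d_0\oplus\mathfrak{g}^d_0$ with the two summands lying in $(\mathfrak{g},J)$ and $(\mathfrak{g},-J)$ respectively. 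This gives $\mathcal{O}^{\C}_{\min,\mathfrak{g}}\cap\mathfrak{g}^d=(\mathcal{O}_{\min}\cap\mathfrak{g}^d_0)\times(\mathcal{O}_{\min}\cap\mathfrak{g}^d_0)$, hence \eqref{item:main3hol:minCgd}$\Leftrightarrow$\eqref{item:main3hol:mind0} immediately. Next I would verify that $\mathfrak{g}^d_0$ is non-compact: if $\mathfrak{u}\cap\mathfrak{q}=0$ then $\mathfrak{q}\subset J\mathfrak{u}$, and $\sigma J=J\sigma$ forces $J\mathfrak{q}\subset\mathfrak{u}\cap\mathfrak{q}=0$, so $\mathfrak{q}=0$, contradicting that $(\mathfrak{g},\mathfrak{h})$ is a proper symmetric pair. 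Thus $\mathfrak{g}^d_0$ is a non-compact real form of $(\mathfrak{g},\pm J)$, and Fact~\ref{fact:smallestnilpcomp} applies to it: its conditions (ii), (i), (iv) are exactly \eqref{item:main3hol:mind0}, \eqref{item:main3hol:nm0}, \eqref{item:main3hol:gd0list}, which settles \eqref{item:main3hol:mind0}$\Leftrightarrow$\eqref{item:main3hol:nm0}$\Leftrightarrow$\eqref{item:main3hol:gd0list}. Finally \eqref{item:main3hol:nmm}$\Leftrightarrow$\eqref{item:main3hol:nm0} follows by halving, since $n(\mathfrak{g}_\C)=2n((\mathfrak{g},\pm J))=m(\mathfrak{g})$ holds automatically and, with $m(\mathfrak{g}^d)=2m(\mathfrak{g}^d_0)$, the inequality $n(\mathfrak{g}_\C)<m(\mathfrak{g}^d)$ is equivalent to $n((\mathfrak{g},\pm J))<m(\mathfrak{g}^d_0)$.

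It remains to match \eqref{item:main3hol:gd0list} with \eqref{item:main3hol:ghlist}, i.e.\ to build Table~\ref{table:gcomplex}. Here I would use the standard bijection between complex symmetric pairs $((\mathfrak{g},\pm J),\mathfrak{h})$ with $\C$-linear involution and non-compact real forms $\mathfrak{g}^d_0$ of $(\mathfrak{g},\pm J)$, under which $\mathfrak{h}$ is the complexification of a maximal compact subalgebra of $\mathfrak{g}^d_0$. Thus each entry $\mathfrak{g}^d_0$ of Table~\ref{table:mingneqmin} recovers $(\mathfrak{g},\mathfrak{h})$ as the realification of $(\mathfrak{g}^d_0)_\C$ paired with the complexified maximal compact of $\mathfrak{g}^d_0$, and conversely. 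I expect this last step---running through the five families $\mathfrak{su}^*_{2n}$, $\mathfrak{so}_{n-1,1}$, $\mathfrak{sp}_{m,n}$, $\mathfrak{e}_{6(-26)}$, $\mathfrak{f}_{4(-20)}$ and computing their complexified Riemannian symmetric pairs to populate Table~\ref{table:gcomplex}---to be the main obstacle, though it is essentially bookkeeping; all the conceptual content sits in the equivalences above.
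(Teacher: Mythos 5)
Your proposal is correct and follows essentially the same route as the paper's proof: identify $\mathcal{O}^{\C}_{\min,\mathfrak{g}}$ with $\mathcal{O}_{\min}\times\mathcal{O}_{\min}$ and $\mathfrak{g}^d$ with $\mathfrak{g}^d_0\oplus\mathfrak{g}^d_0$, apply Fact~\ref{fact:smallestnilpcomp} to the real form $\mathfrak{g}^d_0$ of $(\mathfrak{g},\pm J)$ to chain the equivalences, handle the numerical condition by halving, and match Table~\ref{table:mingneqmin} with Table~\ref{table:gcomplex} via complexification of Riemannian symmetric pairs. The only departures are minor refinements: in part (1) you exhibit an explicit intersection point $(X,\theta\sigma X)$ using automorphism-invariance of the unique minimal orbit (the paper instead observes $\mathcal{O}^{\C}_{\min,\mathfrak{g}^d}=\mathcal{O}_{\min}\times\mathcal{O}_{\min}=\mathcal{O}^{\C}_{\min,\mathfrak{g}}$, which meets $\mathfrak{g}^d$ by definition), and in part (2) you check that $\mathfrak{g}^d_0$ is non-compact before invoking Fact~\ref{fact:smallestnilpcomp}, a hypothesis the paper leaves implicit.
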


    \begin{table}[h]
    \centering
    \caption{List of $(\mathfrak{g},\mathfrak{h})$ with non-absolutely-simple $\mathfrak{g}$ such that $\mathcal{O}^{\C}_{\min,\mathfrak{g}} \cap \mathfrak{g}^d = \emptyset$}
    \label{table:gcomplex}
    \begin{tabular}{c|c}
        \hline
           $(\mathfrak{g},\mathfrak{h})$ & $\mathfrak{g}^d_0 = \mathfrak{h}^a$ \\
        \hline
        \hline
        $(\mathfrak{sl}_{2n}(\C),\mathfrak{sp}_n(\C))$ $(n \geq 2)$ & $\mathfrak{su}^{*}_{2n}$ \\
        $(\mathfrak{so}_n(\C),\mathfrak{so}_{n-1}(\C))$ ($n \geq 5$) & $\mathfrak{so}_{n-1,1}$ \\
        $(\mathfrak{sp}_{m+n}(\C),\mathfrak{sp}_{m}(\C) \oplus \mathfrak{sp}_{n}(\C))$ $(n+m \geq 2, n,m \geq 1)$. &$\mathfrak{sp}_{m,n}$ \\
        $(\mathfrak{e}^\C_{6},\mathfrak{f}^{\C}_4)$ & $\mathfrak{e}_{6(-26)}$ \\        
        $(\mathfrak{f}^\C_{4},\mathfrak{so}_{9}(\C))$ & $\mathfrak{f}_{4(-20)}$ \\
        \hline
    \end{tabular}
    \end{table}

\begin{proof}[Proof of Theorem \ref{theorem:main3}]
As we mentioned in Section \ref{section:minimalreal},  
under the identification of $\mathfrak{g}_\C$ with $(\mathfrak{g},J) \oplus (\mathfrak{g},-J)$, 
the orbit $\mathcal{O}^{\C}_{\min,\mathfrak{g}}$ coincides with 
$\mathcal{O}_{\min} \times \mathcal{O}_{\min}$,
and $n(\mathfrak{g}_\C) = m(\mathfrak{g})$.

First, let us suppose that $\sigma \circ J = (-J) \circ \sigma$.
Then the real form $\mathfrak{g}^d$ of $\mathfrak{g}_\C$ is simple but not absolutely-simple since $\mathfrak{g}^d$ is isomorphic to $\mathfrak{g}$.
Thus by the arguments in Section \ref{section:minimalreal} again, 
we have $n(\mathfrak{g}_\C) = m(\mathfrak{g}^d)$ 
and $\mathcal{O}^{\C}_{\min,\mathfrak{g}^d} = \mathcal{O}_{\min} \times \mathcal{O}_{\min}$.
Thus we have $m(\mathfrak{g}) = n(\mathfrak{g}_\C) = m(\mathfrak{g}^d)$, and $\mathcal{O}^{\C}_{\min,\mathfrak{g}} \cap \mathfrak{g}^d \neq \emptyset$.
This completes the proof of the claim \eqref{item:main3:antiholo}.

Next, suppose that $\sigma \circ J = J \circ \sigma$.
The equivalence among 
\eqref{item:main3hol:minCgd}, 
\eqref{item:main3hol:mind0},
\eqref{item:main3hol:nm0} and 
\eqref{item:main3hol:gd0list}
comes from 
$\mathfrak{g}^d = \mathfrak{g}^d_0 \oplus \mathfrak{g}^{d}_0$,  $\mathcal{O}^{\C}_{\min,\mathfrak{g}} = \mathcal{O}_{\min} \times \mathcal{O}_{\min}$ and Fact \ref{fact:smallestnilpcomp}.
The equivalence \eqref{item:main3hol:nm0} $\Leftrightarrow$ \eqref{item:main3hol:nmm} is followed by $n(\mathfrak{g}_\C) = n(\mathfrak{g},J) + n(\mathfrak{g},-J)$, $n(\mathfrak{g},J) = n(\mathfrak{g},-J)$ and $m(\mathfrak{g}^d) = 2 m(\mathfrak{g}^d_0)$.
Finally, we obtain the equivalence \eqref{item:main3hol:gd0list} $\Leftrightarrow$
\eqref{item:main3hol:ghlist} 
by the observation that $(\mathfrak{g},\pm J)$ is isomorphic to the complexification of $\mathfrak{g}^d_0$, and $\mathfrak{h}$ is the complexification of the maximal compact subalgebra $\mathfrak{u} \cap \mathfrak{g}^d_0$ of $\mathfrak{g}^d_0$
in $(\mathfrak{g},\pm J)$.
The proof of the claim \eqref{item:main3:holo} has been completed.
\end{proof}

\begin{remark}\label{remark:cplxified}
For each symmetric pair $(\mathfrak{g},\mathfrak{h})$, 
the dual Lie algebra of the complexified symmetric pair $(\mathfrak{g} \otimes_\R \C,\mathfrak{h} \otimes_\R \C)$ can be written as $\mathfrak{g}^d \oplus \mathfrak{g}^d$.
Therefore, if $(\mathfrak{g},\mathfrak{h})$ satisfies the conditions in Theorem \ref{theorem:main2}, 
then $(\mathfrak{g} \otimes_\R \C,\mathfrak{h} \otimes_\R \C)$  satisfies the conditions in Theorem \ref{theorem:main3}.    
Note that the converse claim does not hold.
In fact, suppose $\mathfrak{g}$ is absolutely simple 
and both $m(\mathfrak{g})$ and $m(\mathfrak{g}^d)$ are greater than $n(\mathfrak{g}_\C)$, 
then $(\mathfrak{g} \otimes_\R \C,\mathfrak{h} \otimes_\R \C)$ 
satisfies the conditions in Theorem \ref{theorem:main3}
but the conditions in Theorem \ref{theorem:main2} do not hold for $(\mathfrak{g},\mathfrak{h})$.
The symmetric pair $(\mathfrak{g},\mathfrak{h}) = (\mathfrak{su}^*_{2n},\mathfrak{sp}_{2n-j,j})$ with $\mathfrak{g}^d = \mathfrak{su}^*_{2n}$ is one of examples.
\end{remark}

\section{Bounded Multiplicity branching for symmetric pairs}\label{section:BMB}

In this section, as an application of Theorem \ref{theorem:main1},
we give a proof of Theorem \ref{theorem:bmp} stated in Section \ref{section:Intro}.

Let $G$ be a non-compact simple Lie group with Lie algebra $\mathfrak{g}$.
For $\Pi \in \Irr(G)$ and a reductive subgroup $H$ of $G$, 
as in \cite{KobayashiMRMR2022, Kobayashi2023BoundedSymmetric}, 
we say that the restriction $\Pi|_H$ has the bounded multiplicity property if 
\[
\sup_{\pi \in \Irr(H)} [\Pi|_{H}:\pi] < \infty.
\]

T. Kobayashi \cite{Kobayashi2023BoundedSymmetric} gives the following theorem for bounded multiplicity properties of restrictions of irreducible $G$-representation with ``small'' Gelfand--Kirillov dimensions for symmetric pairs:

\begin{fact}[{\cite[Theorem 3]{KobayashiMRMR2022}, \cite[Theorems 5.5 (1) and 6.1]{Kobayashi2023BoundedSymmetric}}]\label{fact:bmpKob}
Let $\Pi \in \Irr(G)$.
\begin{enumerate}[(1)]
    \item \label{item:bmpkob1} Suppose that $n(\mathfrak{g}_\C) = \DIM(\Pi)$.
Then for any symmetric pair $(G,H)$, 
the restriction $\Pi|_H$ has the bounded multiplicity property.
    \item \label{item:bmpkob2} Suppose that $m(\mathfrak{g}) = \DIM(\Pi)$.
Take a symmetric pair $(G,H)$ such that the corresponding symmetric pair $(\mathfrak{g},\mathfrak{h})$ of Lie algebras satisfies the condition ``$\sigma \mu = -\mu$'' (see below for the details).
Then the restriction $\Pi|_H$ has the bounded multiplicity property.
\end{enumerate}
\end{fact}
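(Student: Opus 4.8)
The plan is to derive both parts from Kobayashi's general principle that the branching multiplicity $\sup_{\pi \in \Irr(H)}[\Pi|_H:\pi]$ is governed by the associated variety $\mathcal{V}(\Pi) \subseteq \mathcal{N}(\mathfrak{g}_\C)$ of $\Pi$ together with the geometry of the symmetric decomposition $\mathfrak{g}_\C = \mathfrak{h}_\C \oplus \mathfrak{q}_\C$. The first step in either case is to pin down $\mathcal{V}(\Pi)$. Since $\DIM(\Pi)$ equals half the complex dimension of the $G_\C$-orbit closure $\mathcal{V}(\Pi)$, the hypothesis $\DIM(\Pi) = n(\mathfrak{g}_\C)$ of Part \eqref{item:bmpkob1} forces $\mathcal{V}(\Pi) = \overline{\mathcal{O}^{\C}_{\min}}$, the minimal complex nilpotent orbit closure, while the hypothesis $\DIM(\Pi) = m(\mathfrak{g})$ of Part \eqref{item:bmpkob2} forces $\mathcal{V}(\Pi) = \overline{\mathcal{O}^{\C}_{\min,\mathfrak{g}}}$, the smallest complex nilpotent orbit closure meeting $\mathfrak{g}$. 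With $\mathcal{V}(\Pi)$ identified, the analytic statement reduces to a geometric finiteness/sphericity condition for the complexified fixed-point group $H_\C$ acting on the part of $\mathcal{V}(\Pi)$ seen by the annihilator $\mathfrak{h}_\C^\perp \cong \mathfrak{q}_\C$.

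For Part \eqref{item:bmpkob1} I would exploit the minimality of $\mathcal{O}^{\C}_{\min}$. As the smallest non-zero nilpotent orbit it meets $\mathfrak{q}_\C$ only in nilpotent elements, and $H_\C$ acts on the nilpotent elements of $\mathfrak{q}_\C$ with finitely many orbits by the Kostant--Rallis (Vinberg) theory of $\Z/2$-graded Lie algebras. The decisive feature is that this finiteness, and the accompanying $H_\C$-sphericity of the minimal orbit, are uniform: they are insensitive to which involution $\sigma$ cuts out $\mathfrak{h}$. Feeding this into Kobayashi's criterion yields the bounded multiplicity property simultaneously for every symmetric pair $(G,H)$, which is the content of Fact \ref{fact:bmpKob}\eqref{item:bmpkob1}.

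For Part \eqref{item:bmpkob2} the associated variety is instead $\overline{\mathcal{O}^{\C}_{\min,\mathfrak{g}}}$, which by Fact \ref{fact:smallestnilpcomp} strictly contains $\overline{\mathcal{O}^{\C}_{\min}}$ exactly in the exceptional cases of Table \ref{table:mingneqmin}; there the uniform argument collapses and an extra hypothesis on $\sigma$ becomes genuinely necessary. The plan is to read the condition ``$\sigma\mu=-\mu$'' as the assertion that a nilpotent element $\mu$ representing a minimal real orbit can be chosen in the $(-1)$-eigenspace $\mathfrak{q}$ of $\sigma$, so that the $\mathfrak{sl}_2$-triple through $\mu$ sits compatibly with the grading $\mathfrak{g} = \mathfrak{h} \oplus \mathfrak{q}$; this is precisely the placement that makes $\mathcal{V}(\Pi)$ meet $\mathfrak{q}_\C$ in the nilpotent locus controlled by Kobayashi's criterion.

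The main obstacle is this last verification in Part \eqref{item:bmpkob2}. Since $\mathcal{O}^{\C}_{\min,\mathfrak{g}}$ enjoys no universal minimality, the required finiteness cannot simply be quoted from the $\Z/2$-graded theory as in Part \eqref{item:bmpkob1}; rather, one must show that ``$\sigma\mu=-\mu$'' is strong enough to force the correct intersection and properness behaviour of $\overline{\mathcal{O}^{\C}_{\min,\mathfrak{g}}}$ against $\mathfrak{q}_\C$ in each exceptional real form of Table \ref{table:mingneqmin}. I expect the heart of the argument, and the essential use of the explicit $\mathfrak{sl}_2$-data underlying Fact \ref{fact:smallestnilpcomp}, to lie here.
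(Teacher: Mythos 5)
The paper does not actually prove Fact \ref{fact:bmpKob}: it is imported verbatim from \cite[Theorem 3]{KobayashiMRMR2022} and \cite[Theorems 5.5 (1) and 6.1]{Kobayashi2023BoundedSymmetric}, so your sketch can only be measured against those sources. Your first step is sound: since $\DIM(\Pi)$ is half the dimension of the associated variety of $\operatorname{Ann}(\Pi)$, the hypothesis $\DIM(\Pi)=n(\mathfrak{g}_\C)$ forces $\overline{\mathcal{O}^{\C}_{\min}}$, and $\DIM(\Pi)=m(\mathfrak{g})$ forces $\overline{\mathcal{O}^{\C}_{\min,\mathfrak{g}}}$ (using that the associated variety of an annihilator of some $\Pi \in \Irr(G)$ is the closure of an orbit meeting $\mathfrak{g}$, plus the uniqueness in \cite{Okuda2015smallest}). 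The genuine gap in Part (1) is that Kostant--Rallis finiteness cannot be your ``decisive feature.'' Finiteness of the $H_\C$-orbits on $\mathcal{N}(\mathfrak{q}_\C)$ holds for \emph{every} symmetric pair and carries no information about $\Pi$; if it sufficed for Kobayashi's criterion, every restriction $\Pi|_H$ to a symmetric subgroup would have bounded multiplicity, which is false --- for most symmetric pairs there exist $\Pi$ with unbounded, even infinite, multiplicities. (The clause ``meets $\mathfrak{q}_\C$ only in nilpotent elements'' is likewise vacuous for a nilpotent orbit.) The load-bearing input, which you name only in passing and never verify, is that $\mathcal{O}^{\C}_{\min}$ is $\mathfrak{h}_\C$-spherical --- a Borel subgroup of $H_\C$ has an open orbit on it --- uniformly over all involutions; establishing exactly this is the substance of Kobayashi's proof, so your outline assumes what is to be proved.

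In Part (2) you misread the hypothesis ``$\sigma\mu=-\mu$.'' Here $\mu$ is the highest \emph{restricted root}, an element of $\mathfrak{a}^{*}$, not a nilpotent element, and the condition concerns the involution induced by $\sigma$ on $\mathfrak{a}^{*}$. By Proposition \ref{proposition:sigmamucondition_nilp_condition} it is equivalent to the \emph{hyperbolic} coroot $\mu^\vee$ lying in $\mathfrak{a}^{-\sigma}\subset\mathfrak{p}\cap\mathfrak{q}$, equivalently to $\mathcal{O}^{\C}_{\min,\mathfrak{g}}\cap\mathfrak{g}^{d}\neq\emptyset$ --- a placement of the semisimple member of the $\mathfrak{sl}_2$-triple in $\mathfrak{q}$, not of a minimal nilpotent representative. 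Your reading fails already for Riemannian pairs: when $\sigma=\theta$ one has $\sigma\mu=-\mu$ automatically (as $\theta=-\id$ on $\mathfrak{a}$), yet $\mathfrak{q}=\mathfrak{p}$ contains no nonzero nilpotent element whatsoever. Relatedly, your plan to grind through the real forms of Table \ref{table:mingneqmin} misses the structural point that the paper itself exploits: by Theorem \ref{theorem:main1}, for every $\mathfrak{g}$ in that table the condition $\sigma\mu=-\mu$ holds for \emph{all} symmetric pairs $(\mathfrak{g},\mathfrak{h})$ (this is how Theorem \ref{theorem:bmp} is deduced from Fact \ref{fact:bmpKob}), so no case-by-case ``intersection and properness'' verification against $\mathfrak{q}_\C$ of the kind you anticipate can substitute for the actual analytic content of \cite[Theorem 6.1]{Kobayashi2023BoundedSymmetric}, namely bounded multiplicity \emph{given} the hypothesis.
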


Let us explain the condition ``$\sigma \mu = -\mu$'' for a symmetric pair $(\mathfrak{g},\mathfrak{h})$ with simple $\mathfrak{g}$. 
As in Section \ref{section:Intro}, we denote by $\sigma$ the involution on $\mathfrak{g}$ defining $\mathfrak{h}$, and put $\mathfrak{q} := \{ X \in \mathfrak{g} \mid \sigma(X) = -X \}$.
Fix a Cartan involution $\theta$ on $\mathfrak{g}$ commuting with $\sigma$, 
and write $\mathfrak{g} = \mathfrak{k} + \mathfrak{p}$ for the Cartan decomposition.
We take a maximal abelian subspace $\mathfrak{a}^{-\sigma}$ of $\mathfrak{p} \cap \mathfrak{q}$, 
and extend it to a maximal abelian subspace $\mathfrak{a}$ of $\mathfrak{p}$.
Note that $\mathfrak{a}$ is $\sigma$-stable.
The dual of the vector space $\mathfrak{a}$ is denoted by $\mathfrak{a}^*$, 
and we use the same symbol $\sigma$ for the involution on $\mathfrak{a}^{*}$ induced by $\sigma$ on $\mathfrak{a}$.
Then the root system $\Sigma(\mathfrak{g},\mathfrak{a})$ is known to be $\sigma$-stable.
Let us also fix a lexicographic ordering on $\mathfrak{a}^{-\sigma}$, and extend it to that on $\mathfrak{a}$.
The positive system 
of the root system $\Sigma(\mathfrak{g},\mathfrak{a})$ corresponding to the fixed ordering is denoted by $\Sigma^+(\mathfrak{g},\mathfrak{a})$.
We write $\mu$ for the highest element in $\Sigma^{+}(\mathfrak{g},\mathfrak{a})$.
Then the vectors $\sigma \mu$ and $-\mu$ in $\mathfrak{a}^*$ are both defined.
Note that the condition ``$\sigma \mu = -\mu$'' depends only on $(\mathfrak{g},\mathfrak{h})$.

Theorem \ref{theorem:bmp}
 can be obtained as a corollary to Kobayashi's theorem (Fact \ref{fact:bmpKob}), Theorem \ref{theorem:main1} and the following proposition:

\begin{proposition}\label{proposition:sigmamucondition_nilp_condition}
Let $(\mathfrak{g},\mathfrak{h})$ be a symmetric pair with simple $\mathfrak{g}$.
Then the condition ``$\sigma \mu = -\mu$'' holds if and only if $\mathcal{O}^{\C}_{\min,\mathfrak{g}} \cap \mathfrak{g}^d \neq \emptyset$.
\end{proposition}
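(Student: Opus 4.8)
The plan is to translate the combinatorial condition ``$\sigma\mu=-\mu$'' into a statement about the semisimple characteristic of $\mathcal{O}^{\C}_{\min,\mathfrak{g}}$, and then read off the two implications from the position of the highest restricted root space relative to $\mathfrak{g}^d$. First I would record how $\sigma$ and $\theta$ act on $\mathfrak{a}^*$. Since $\mathfrak{a}\subset\mathfrak{p}$ we have $\theta|_{\mathfrak{a}}=-\id$, hence $\theta\alpha=-\alpha$ for every $\alpha\in\Sigma(\mathfrak{g},\mathfrak{a})$; writing $\alpha=\alpha'+\alpha''$ for the decomposition along $(\mathfrak{a}\cap\mathfrak{q})^*$ and $(\mathfrak{a}\cap\mathfrak{h})^*$, one computes $\sigma\alpha=-\alpha'+\alpha''$. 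Consequently the associated involution $\theta\sigma$ satisfies $(\theta\sigma)\mu=-\sigma\mu$, so that
\begin{align*}
\sigma\mu=-\mu \iff (\theta\sigma)\mu=\mu \iff \mu|_{\mathfrak{a}\cap\mathfrak{h}}=0 \iff A_\mu\in\mathfrak{a}\cap\mathfrak{q}=\mathfrak{a}^{-\sigma},
\end{align*}
where $A_\mu\in\mathfrak{a}$ denotes the coroot characteristic of the $\mathfrak{sl}_2$-triple attached to a highest restricted root vector. This reformulation is the backbone of the argument.

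For the implication ``$\sigma\mu=-\mu\Rightarrow\mathcal{O}^{\C}_{\min,\mathfrak{g}}\cap\mathfrak{g}^d\neq\emptyset$'' I would argue as follows. Put $\mathfrak{q}^a:=(\mathfrak{k}\cap\mathfrak{q})+(\mathfrak{p}\cap\mathfrak{h})$, so that $\mathfrak{g}^d=\mathfrak{h}^a+\sqrt{-1}\,\mathfrak{q}^a$ with $\mathfrak{h}^a=(\mathfrak{k}\cap\mathfrak{h})+(\mathfrak{p}\cap\mathfrak{q})$ the fixed algebra of $\theta\sigma$. When $\sigma\mu=-\mu$ the first paragraph gives $(\theta\sigma)\mu=\mu$, hence $\theta\sigma$ preserves $\mathfrak{g}_\mu$ and splits it as $(\mathfrak{g}_\mu\cap\mathfrak{h}^a)\oplus(\mathfrak{g}_\mu\cap\mathfrak{q}^a)$, with at least one summand nonzero. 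Choosing a nonzero $E$ in a nonzero summand, a nonzero element of the highest restricted root space represents a minimal real nilpotent orbit by \cite{Okuda2015smallest}, so $E\in\mathcal{N}_{\min}(\mathfrak{g})\subset\mathcal{O}^{\C}_{\min,\mathfrak{g}}$; if $E\in\mathfrak{g}_\mu\cap\mathfrak{h}^a$ then $E\in\mathfrak{h}^a\subset\mathfrak{g}^d$, while if $E\in\mathfrak{g}_\mu\cap\mathfrak{q}^a$ then $\sqrt{-1}\,E\in\sqrt{-1}\,\mathfrak{q}^a\subset\mathfrak{g}^d$. Since $\Ad(\exp(zA_\mu))E$ is a nonzero scalar multiple of $E$ for all $z\in\C$ and realizes the factor $\sqrt{-1}$ for a suitable $z$, passing from $E$ to $\sqrt{-1}\,E$ stays inside the complex orbit; in either case a point of $\mathcal{O}^{\C}_{\min,\mathfrak{g}}$ lies in $\mathfrak{g}^d$.

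For the converse I would pass to the contrapositive and assume $\sigma\mu\neq-\mu$, equivalently that the $\mathfrak{a}\cap\mathfrak{h}$-component $A''$ of $A_\mu$ is nonzero. The natural route is Okuda's criterion [cf.~\cite{Okuda2015smallest, Okuda13}]: $\mathcal{O}^{\C}_{\min,\mathfrak{g}}$ meets $\mathfrak{g}^d$ precisely when its weighted Dynkin diagram matches the Satake diagram of $\mathfrak{g}^d$; equivalently, by Kostant--Sekiguchi, when $A_\mu$ is $G_\C$-conjugate to a neutral element lying in the split part $\mathfrak{p}^d=\sqrt{-1}(\mathfrak{k}\cap\mathfrak{q})+(\mathfrak{p}\cap\mathfrak{q})$ of $\mathfrak{g}^d$. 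I would show this fails when $A''\neq0$: the component $A''\in\mathfrak{p}\cap\mathfrak{h}$ feeds the compact direction $\sqrt{-1}(\mathfrak{p}\cap\mathfrak{h})\subset\mathfrak{k}^d$ rather than $\mathfrak{p}^d$, obstructing the conjugacy. Making this precise is the main difficulty, since $G_\C$-conjugacy does not see the splitting $A_\mu=A'+A''$ directly; one must relate $\mathfrak{a}^{-\sigma}$ to a maximal split abelian subspace of $\mathfrak{g}^d$ and track the eigenvalues of $\ad(A_\mu)$. As a safeguard I would, in parallel, derive the converse from the classification already established in Theorems \ref{theorem:main2} and \ref{theorem:main3}: combined with the forward implication, this reduces the claim to verifying on the finite lists of Tables \ref{table:gabssimple} and \ref{table:gcomplex} that $\sigma\mu\neq-\mu$ holds for exactly those pairs, a bounded case check. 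The genuine obstacle, and the step I expect to absorb most of the work, is the backward direction $\sigma\mu\neq-\mu\Rightarrow\mathcal{O}^{\C}_{\min,\mathfrak{g}}\cap\mathfrak{g}^d=\emptyset$.
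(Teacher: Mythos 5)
Your setup and your forward implication are correct. The dictionary $\sigma\mu=-\mu \Leftrightarrow \mu^{\vee}\in\mathfrak{a}^{-\sigma}$ agrees with the paper, and your direct construction for ``$\sigma\mu=-\mu \Rightarrow \mathcal{O}^{\C}_{\min,\mathfrak{g}}\cap\mathfrak{g}^d\neq\emptyset$'' --- splitting $\mathfrak{g}_\mu$ under $\theta\sigma$ as $(\mathfrak{g}_\mu\cap\mathfrak{h}^a)\oplus(\mathfrak{g}_\mu\cap\mathfrak{q}^a)$, using that any nonzero $E\in\mathfrak{g}_\mu$ lies in $\mathcal{O}^{\C}_{\min,\mathfrak{g}}$ by \cite[Theorem 1.1]{Okuda2015smallest}, and moving $E$ to $\sqrt{-1}\,E$ inside the complex orbit via $\Ad(\exp(z\mu^{\vee}))$ --- is sound, and is in fact more self-contained than the paper's treatment of that direction, which runs both implications through hyperbolic orbits.

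The backward implication, however, is a genuine gap, and you flag it yourself. Your Option A stalls exactly where the paper brings in its key external input, namely the hyperbolic-orbit machinery of \cite{Okuda13}. The paper's argument is a chain of equivalences: $\mathcal{O}^{\C}_{\min,\mathfrak{g}}$ meets $\mathfrak{g}^d$ $\Leftrightarrow$ the complex adjoint orbit $\mathcal{O}^{\C}_{\mu^{\vee}}$ through the hyperbolic element $\mu^{\vee}$ meets $\mathfrak{g}^d$ (\cite[Lemma 4.7]{Okuda13}) $\Leftrightarrow$ the real adjoint orbit $\mathcal{O}_{\mu^{\vee}}$ meets $\mathfrak{g}\cap\mathfrak{g}^d=\mathfrak{h}^a$ (\cite[Proposition 4.6]{Okuda13}) $\Leftrightarrow$ $\mathcal{O}_{\mu^{\vee}}$ meets $\mathfrak{a}^{-\sigma}\cap\mathfrak{a}_{+}$ (\cite[Lemma 4.9 and Fact 5.1]{Okuda13}); since a real hyperbolic orbit through a point of the closed chamber $\mathfrak{a}_{+}$ meets $\mathfrak{a}_{+}$ only at that point, the last condition forces $\mu^{\vee}\in\mathfrak{a}^{-\sigma}$, i.e.\ $\sigma\mu=-\mu$. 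These three citations supply precisely the step you could not make precise (relating $\mathfrak{a}^{-\sigma}$ to a split part of $\mathfrak{g}^d$ and controlling conjugacy of the hyperbolic element); without them, Option A is not a proof. Option B does not repair this, because it is mis-scoped: by your own forward implication, every pair in Tables \ref{table:gabssimple} and \ref{table:gcomplex} automatically satisfies $\sigma\mu\neq-\mu$, so nothing on those tables needs checking; what the converse actually requires is that $\sigma\mu=-\mu$ holds for every symmetric pair \emph{not} on the tables. That is a sweep through the whole of Berger's classification \cite{Berger_classification}, computing the $\sigma$-action on the restricted root system pair by pair (essentially reproducing the data of \cite{OshimaSekiguchi84}), not a bounded check of two short tables. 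Invoking Theorems \ref{theorem:main2} and \ref{theorem:main3} would at least not be circular, since their proofs do not use this proposition, but the case work you would be committing to is of a different order than what you describe.
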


\begin{proof}[Proof of Proposition \ref{proposition:sigmamucondition_nilp_condition}]
Let us denote by the coroot $\mu^\vee \in \mathfrak{a}$ of the highest root $\mu \in \Sigma(\mathfrak{g},\mathfrak{a})$.
We denote by $\mathcal{O}_{\mu^\vee}$ the real adjoint orbit in $\mathfrak{g}$ through $\mu^\vee$, 
and by $\mathcal{O}_{\mu^\vee}^\C$ the complex adjoint orbit in $\mathfrak{g}_\C$ through $\mu^\vee$.
We shall prove that the following six conditions on $(\mathfrak{g},\mathfrak{h})$ are equivalent:
\begin{enumerate}
    \item \label{item::munilp:nilp} $\mathcal{O}^\C_{\min,\mathfrak{g}}$ meets $\mathfrak{g}^d$.
    \item \label{item::munilp:hyp} $\mathcal{O}^\C_{\mu^\vee}$ meets $\mathfrak{g}^d$.
    \item \label{item::munilp:realhyp} $\mathcal{O}_{\mu^\vee}$ meets $\mathfrak{g} \cap \mathfrak{g}^d = \mathfrak{h}^a = (\mathfrak{k} \cap \mathfrak{h}) + (\mathfrak{p} \cap \mathfrak{q})$.
    \item \label{item::munilp:a} $\mathcal{O}_{\mu^\vee}$ meets $\mathfrak{a}^{-\sigma} \cap \mathfrak{a}_+$ at one point, where $\mathfrak{a}_+$ denotes the closed Weyl chamber corresponding to the positive system $\Sigma^+(\mathfrak{g},\mathfrak{a})$.
    \item \label{item::munilp:muvee} $\mu^\vee \in \mathfrak{a}^{-\sigma}$.
    \item \label{item::munilp:mu} $\sigma \mu = -\mu$.
\end{enumerate}
Let us fix any non-zero root vector $X \in \mathfrak{g}_\mu$.
Then one can find $Y \in \mathfrak{g}_{-\mu}$ such that $(\mu^\vee,X,Y)$ is an $\mathfrak{sl}_2$-triple, and $X \in \mathcal{O}^{\C}_{\min,\mathfrak{g}}$ by \cite[Theorem 1.1]{Okuda2015smallest}.
Thus we obtain the equivalence \eqref{item::munilp:nilp} $\Leftrightarrow$ \eqref{item::munilp:hyp} by \cite[Lemma 4.7]{Okuda13}.
Furthermore, by \cite[Proposition 4.6]{Okuda13}, 
we have the equivalence \eqref{item::munilp:hyp}
$\Leftrightarrow$ \eqref{item::munilp:realhyp}.
The equivalence 
\eqref{item::munilp:realhyp}
$\Leftrightarrow$
\eqref{item::munilp:a}
comes from \cite[Lemma 4.9 and Fact 5.1]{Okuda13}.
Since $\mu$ is the highest root of $\Sigma^+(\mathfrak{g},\mathfrak{a})$, 
we have $\mu^\vee \in \mathfrak{a}_+$,
and hence \eqref{item::munilp:a} $\Leftrightarrow$ \eqref{item::munilp:muvee}.
The equivalence \eqref{item::munilp:muvee} $\Leftrightarrow$ \eqref{item::munilp:mu} is easy.
\end{proof}

Let us also give a remark for almost irreducible branching lows as below. 
Let $(G,H)$ be a symmetric pair with a non-compact connected absolutely simple Lie group $G$, and denote by $(\mathfrak{g},\mathfrak{h})$ the corresponding pair of Lie algebras. 
Suppose $G$ has a minimal representation $\Pi_{\min} \in \Irr(G)$ (see \cite[Lemma 5.8]{Kobayashi2023BoundedSymmetric} and \cite{Tamori2019MathZ} for the construction of minimal representations in suitable situations). 
Then $n(\mathfrak{g}_\C) = \DIM(\Pi_{\min})$, and consequently, $n(\mathfrak{g}_\C) = m(\mathfrak{g})$.
Consider the restriction $\Pi_{\min}|_H$ of $\Pi_{\min}$ to $H$. Since $\DIM(\Pi_{\min}) = n(\mathfrak{g}_\C)$, by Fact \ref{fact:bmpKob} \eqref{item:bmpkob1}, the restriction $\Pi_{\min}|_H$ exhibits the bounded multiplicity property (refer to \cite[Theorem 7]{KobayashiMRMR2022}).

In such a situation, as mentioned by Kobayashi \cite[Remark 6.4]{Kobayashi2023BoundedSymmetric}, 
the following theorem holds as an application of 
\cite[Theorem 10]{KobayashiMRMR2022}: 

\begin{theorem}\label{theorem:star}
In the setting above, 
suppose that $(\mathfrak{g},\mathfrak{h})$ can be found  in Table \ref{table:gabssimple}, or equivalently ``$\sigma \mu \neq -\mu$''. 
Then the restriction $\Pi_{\min}|_H$ is almost irreducible, that is, the $H$-module $\Pi_{\min}|_H$ remains irreducible or decomposes into a finite sum of irreducible representations of $H$.
\end{theorem}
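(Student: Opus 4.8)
The plan is to deduce the statement directly from Kobayashi's criterion for almost irreducible branching, namely \cite[Theorem 10]{KobayashiMRMR2022}, once its hypotheses have been matched against the geometric condition classified in this paper; no new computation beyond this matching should be needed. The first step is a translation of hypotheses. In the setting of the theorem the preceding remark already gives $\DIM(\Pi_{\min}) = n(\mathfrak{g}_\C) = m(\mathfrak{g})$, so $\Pi_{\min}$ is an irreducible representation of minimum Gelfand--Kirillov dimension. On the geometric side I would record the chain of equivalences furnished by this paper: by Theorem \ref{theorem:main2} the condition ``$(\mathfrak{g},\mathfrak{h})$ appears in Table \ref{table:gabssimple}'' is equivalent to $\mathcal{O}^{\C}_{\min,\mathfrak{g}} \cap \mathfrak{g}^d = \emptyset$, and by Proposition \ref{proposition:sigmamucondition_nilp_condition} the latter is equivalent to ``$\sigma\mu \neq -\mu$''. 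This justifies the ``or equivalently'' in the statement and reduces everything to applying Theorem~10 under the single condition ``$\sigma\mu\neq-\mu$''.

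The second step is to feed this into Kobayashi's machinery and to isolate what makes the decomposition \emph{finite}. Note first that, because $\DIM(\Pi_{\min}) = n(\mathfrak{g}_\C)$, Fact \ref{fact:bmpKob} \eqref{item:bmpkob1} already supplies the bounded multiplicity property of $\Pi_{\min}|_H$ for any symmetric pair; so the only additional input required for almost irreducibility is that finitely many $\pi \in \Irr(H)$ occur. I expect the extra condition ``$\sigma\mu\neq-\mu$'' to supply exactly this, through two mechanisms. First, it forces discrete decomposability of $\Pi_{\min}|_H$: via Theorem \ref{theorem:SL2properGHa}, $\mathcal{O}^{\C}_{\min,\mathfrak{g}}\cap\mathfrak{g}^d=\emptyset$ is equivalent to properness of the relevant $SL_2(\R)$- or $PSL_2(\R)$-action on $G/H^a$, which is precisely the sort of properness feeding Kobayashi's associated-variety criterion for $H$-admissibility. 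Second, since $\Pi_{\min}$ is supported, through its associated cycle, on the closure of a single minimal nilpotent orbit and has the smallest possible Gelfand--Kirillov dimension, its irreducible constituents cannot be of strictly smaller dimension and their set is finite; combining discrete decomposability, bounded multiplicity, and this finiteness yields the finite-sum decomposition, i.e.\ almost irreducibility. This is the content of \cite[Theorem 10]{KobayashiMRMR2022} applied to $\Pi_{\min}$, as indicated in \cite[Remark 6.4]{Kobayashi2023BoundedSymmetric}.

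The main obstacle I anticipate is not a calculation but the precise matching of hypotheses and conclusions. One must confirm that the form of \cite[Theorem 10]{KobayashiMRMR2022} invoked here genuinely outputs a \emph{finite} decomposition, rather than merely bounded multiplicity (already covered by Fact \ref{fact:bmpKob}) or discrete decomposability with possibly infinitely many constituents, and that it applies to the specific minimal representation $\Pi_{\min}$ and not only to an abstract $\Pi$ of the same Gelfand--Kirillov dimension. If Theorem~10 is phrased through abstract hypotheses, for instance a properness or momentum-map condition rather than ``$\sigma\mu\neq-\mu$'' verbatim, then the residual work is to check that those hypotheses are implied by $\mathcal{O}^{\C}_{\min,\mathfrak{g}}\cap\mathfrak{g}^d=\emptyset$ using Theorem \ref{theorem:SL2properGHa} together with the equivalences above. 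This is exactly where the classification of Section \ref{section:classification} is brought to bear, and it is the substance of the assertion.
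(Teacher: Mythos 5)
Your overall strategy (reduce to Kobayashi's \cite[Theorem 10]{KobayashiMRMR2022} by matching its hypothesis against the classification in this paper) is the right one, and your first step --- the chain ``$(\mathfrak{g},\mathfrak{h})$ in Table \ref{table:gabssimple}'' $\Leftrightarrow$ $\mathcal{O}^{\C}_{\min,\mathfrak{g}} \cap \mathfrak{g}^d = \emptyset$ $\Leftrightarrow$ ``$\sigma\mu \neq -\mu$'' via Theorem \ref{theorem:main2} and Proposition \ref{proposition:sigmamucondition_nilp_condition} --- is exactly how the paper begins. But the core of the proof, namely the actual verification of the hypothesis of Theorem 10, is missing, and the mechanisms you substitute for it do not work. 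Kobayashi's hypothesis is not a properness, momentum-map, or admissibility condition; it is the algebraic Condition $(\star)$: there exists a \emph{Riemannian} symmetric pair $(\mathfrak{g}_0,\mathfrak{k}_0)$ with $n(\mathfrak{g}_\C) < m(\mathfrak{g}_0)$ whose complexification is isomorphic to $(\mathfrak{g}_\C,\mathfrak{h}_\C)$. The key observation in the paper --- and the one idea absent from your proposal --- is that $(\mathfrak{g}^d,\mathfrak{k}(\mathfrak{g}^d))$ is the \emph{unique} Riemannian symmetric pair whose complexification is $(\mathfrak{g}_\C,\mathfrak{h}_\C)$, so Condition $(\star)$ is precisely $n(\mathfrak{g}_\C) < m(\mathfrak{g}^d)$. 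Since the standing setting already gives $n(\mathfrak{g}_\C) = m(\mathfrak{g})$, this is exactly condition (ii) of Theorem \ref{theorem:main2}, hence equivalent to membership in Table \ref{table:gabssimple}, and the theorem follows at once. Your own plan defers this matching to ``residual work'' routed through Theorem \ref{theorem:SL2properGHa}, which never lands on Condition $(\star)$.

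Beyond being the wrong route, your proposed mechanisms contain non sequiturs that could not be repaired into a proof. That properness of the $SL_2(\R)$-action on $G/H^a$ ``forces discrete decomposability of $\Pi_{\min}|_H$'' is an unsupported leap: Theorem \ref{theorem:SL2properGHa} concerns proper actions on the associated symmetric space and is not (in this paper or in the cited references) a criterion for $H$-admissibility of restrictions. More seriously, your finiteness argument --- that minimal Gelfand--Kirillov dimension plus bounded multiplicity plus discrete decomposability yields finitely many irreducible constituents --- is false as stated: a discretely decomposable, multiplicity-bounded restriction can perfectly well have infinitely many constituents (discrete series branching laws of this kind abound), and GK-dimension considerations alone put no bound on the number of summands. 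Finiteness of the decomposition is exactly the conclusion of Kobayashi's Theorem 10; it cannot be re-derived from the weaker inputs you list, which is why the hypothesis-matching via Condition $(\star)$ is not a formality but the substance of the proof.
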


\begin{proof}
Let us state Condition $(\star)$ on $(\mathfrak{g},\mathfrak{h})$ as below:
\begin{description}
    \item[Condition ($\star$)] There exists a Riemannian symmetric pair $(\mathfrak{g}_0,\mathfrak{k}_0)$ such that $n(\mathfrak{g}_\C) < m(\mathfrak{g}_0)$ and $(\mathfrak{g}_\C, \mathfrak{h}_\C) := (\mathfrak{g} \otimes_\R \C, \mathfrak{h} \otimes_\R \C)$ is isomorphic to $(\mathfrak{g}_0 \otimes_\R \C,\mathfrak{k}_0 \otimes_\R \C)$.
\end{description}
Kobayashi \cite[Theorem 10]{KobayashiMRMR2022} proved that under the condition $(\star)$, 
the restriction $\Pi_{\min}|_H$ is almost irreducible.

We note that $(\mathfrak{g}^d,\mathfrak{k}(\mathfrak{g}^d))$ is the unique Riemannian symmetric pair whose complexification is isomorphic to $(\mathfrak{g}_\C,\mathfrak{h}_\C)$. 
Thus Condition $(\star)$ is nothing but $n(\mathfrak{g}_\C) < m(\mathfrak{g}^d)$. 
Thus the claim of Theorem \ref{theorem:star} follows directly from \cite[Theorem 10]{KobayashiMRMR2022}, Theorem \ref{theorem:main2} and Proposition  \ref{proposition:sigmamucondition_nilp_condition}.\end{proof}

\begin{remark}
See also \cite{Kobayashi2011Zuckerman60} for other triples $(G,H,\pi)$ with almost irreducible branching laws. 
\end{remark}

\section{Proper $SL_2(\R)$-actions associated to real minimal nilpotent orbits}\label{section:properSL2}

We also give another application of our results in Section \ref{section:classification} for proper $SL_2(\R)$-actions on symmetric spaces (cf.~\cite{Okuda13}). 

Let $(\mathfrak{g},\mathfrak{h})$ be 
a symmetric pair with a simple Lie algebra $\mathfrak{g}$.
We denote by $(\mathfrak{g},\mathfrak{h}^a)$ the associated pair of $(\mathfrak{g},\mathfrak{h})$ in the sense of \cite{OshimaSekiguchi84}.
Note that $\mathfrak{g} \cap \mathfrak{g}^d = \mathfrak{h}^a$.
Let $(G,H^a)$ be a symmetric pair of Lie groups whose symmetric pair of Lie algebras is isomorphic to $(\mathfrak{g},\mathfrak{h}^a)$.
We also suppose that $G$ is linear and connected.

For each Lie algebra homomorphism, 
$\rho : \mathfrak{sl}_2(\R) \rightarrow \mathfrak{g}$, 
we denote by $\mathcal{O}^{\R}_{\rho,\mathrm{nilp}}$ and $\mathcal{O}^{\C}_{\rho,\mathrm{nilp}}$ for the real nilpotent adjoint orbit and the complex nilpotent adjoint orbit through the nilpotent element
\[
\rho\left(\begin{pmatrix} 0 & 1 \\ 0 & 0 \end{pmatrix}\right) \in \mathfrak{g}
\]
in $\mathfrak{g}$ and in $\mathfrak{g}_\C := \mathfrak{g} \otimes_\R \C$, respectively.
Note that by the Jacobson--Morozov theorem and Kostant's theorem for real nilpotent orbits in $\mathfrak{g}$ (see \cite[Chapter 9]{CollingwoodMcGovern} for the details), 
the correspondence $\rho \mapsto \mathcal{O}^{\R}_{\rho,\mathrm{nilp}}$ defines a bijection from the set of 
all $\Int(\mathfrak{g})$-conjugacy classes of $\rho: \mathfrak{sl}_2(\R)  \rightarrow \mathfrak{g}$
onto the set of all real nilpotent orbits in $\mathfrak{g}$, where $\Int(\mathfrak{g})$ denotes the group of all inner-automorphisms on $\mathfrak{g}$.

Let us use the same symbol $\rho$ for the Lie group homomorphism $\rho : SL_2(\R) \rightarrow G$ corresponding to $\rho : \mathfrak{sl}_2(\R) \rightarrow \mathfrak{g}$ (see \cite[Lemma 5.4]{Okuda13}).
Then by \cite[Theorem 10.1]{Okuda13}, which is a corollary to the properness criterion by T.~Kobayashi \cite[Theorem 4.1]{Kobayashi89}, 
the $SL_2(\R)$-action on the symmetric space $G/{H^a}$ via $\rho : SL_2(\R) \rightarrow G$ is proper if and only if $\mathcal{O}^{\C}_{\rho,\mathrm{nilp}} \cap \mathfrak{g}^d = \emptyset$.  

We shall consider a Lie algebra homomorphism $\rho_{\min} : \mathfrak{sl}_2(\R) \rightarrow \mathfrak{g}$ such that $\mathcal{O}^{\R}_{\rho_{\min},\mathrm{nilp}}$ is one of the real minimal nilpotent orbit in $\mathfrak{g}$.
Then $\mathcal{O}^{\C}_{\rho_{\min},\mathrm{nilp}} = \mathcal{O}^{\C}_{\min,\mathfrak{g}}$ and 
the hyperbolic element
\[
\rho_{\min} \left(\begin{pmatrix} 1 & 0 \\ 0 & -1 \end{pmatrix}\right) \in \mathfrak{g}
\]
is conjugate to the coroot $\mu^\vee \in \mathfrak{a}$ of the highest element $\mu$ of $\Sigma^+(\mathfrak{g},\mathfrak{a})$ defined in Section \ref{section:BMB}.

Then by Theorems \ref{theorem:main2} and \ref{theorem:main3}, we have the theorem below:

\begin{theorem}\label{theorem:SL2properGHa}
The following three conditions on $(\mathfrak{g},\mathfrak{h})$ are equivalent: 
\begin{enumerate}
    \item The $SL_2(\R)$-action on $G/{H^a}$ via $\rho_{\min}$ is proper.
    \item $\mathcal{O}^{\C}_{\min,\mathfrak{g}} \cap \mathfrak{g}^d = \emptyset$.
    \item The symmetric pair $(\mathfrak{g},\mathfrak{h})$ is isomorphic to one of the symmetric pairs in Tables \ref{table:gabssimple} or \ref{table:gcomplex}.
\end{enumerate}
\end{theorem}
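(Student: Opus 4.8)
The plan is to deduce the theorem by combining the properness criterion recalled just above the statement with the classification results of Section \ref{section:classification}. All three conditions are linked through the single orbit $\mathcal{O}^{\C}_{\min,\mathfrak{g}}$, so the proof reduces to chaining two equivalences together, and no computation beyond what the preceding sections already provide should be required.

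First I would establish the equivalence of (1) and (2). By \cite[Theorem 10.1]{Okuda13}, the $SL_2(\R)$-action on $G/H^a$ via $\rho_{\min}$ is proper if and only if $\mathcal{O}^{\C}_{\rho_{\min},\mathrm{nilp}} \cap \mathfrak{g}^d = \emptyset$. The key input is the identity $\mathcal{O}^{\C}_{\rho_{\min},\mathrm{nilp}} = \mathcal{O}^{\C}_{\min,\mathfrak{g}}$ noted in the paragraph preceding the statement, which holds because $\mathcal{O}^{\R}_{\rho_{\min},\mathrm{nilp}}$ is a real minimal nilpotent orbit and hence is contained in the complex orbit $\mathcal{O}^{\C}_{\min,\mathfrak{g}}$. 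Substituting this identity into the criterion yields (1) $\Leftrightarrow$ (2) at once. I would also remark that in the Hermitian case there are two minimal real orbits $\mathcal{O}_{\min,1}$ and $\mathcal{O}_{\min,2} = -\mathcal{O}_{\min,1}$, but both complexify to the same orbit $\mathcal{O}^{\C}_{\min,\mathfrak{g}}$, so the choice of $\rho_{\min}$ does not affect the properness condition.

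Next I would establish (2) $\Leftrightarrow$ (3) by invoking the earlier classification, splitting according to whether $\mathfrak{g}$ is absolutely simple. If $\mathfrak{g}$ is absolutely simple, Theorem \ref{theorem:main2} gives that $\mathcal{O}^{\C}_{\min,\mathfrak{g}} \cap \mathfrak{g}^d = \emptyset$ holds precisely when $(\mathfrak{g},\mathfrak{h})$ appears in Table \ref{table:gabssimple}. If $\mathfrak{g}$ is not absolutely simple, then $\mathfrak{g}$ carries complex structures $\{ \pm J \}$ and Theorem \ref{theorem:main3} applies: in the case $\sigma \circ J = (-J) \circ \sigma$ the intersection is always nonempty, so such pairs never occur in the tables, while in the case $\sigma \circ J = J \circ \sigma$ the intersection is empty exactly when $(\mathfrak{g},\mathfrak{h})$ appears in Table \ref{table:gcomplex}. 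Taking the union of the two tables therefore captures exactly the pairs with $\mathcal{O}^{\C}_{\min,\mathfrak{g}} \cap \mathfrak{g}^d = \emptyset$, which gives (2) $\Leftrightarrow$ (3).

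The proof is thus essentially an assembly of already-proved statements, and I do not expect a genuine obstacle. The one point demanding care is the reduction in the second paragraph: one must confirm that properness of the action depends only on the complex nilpotent orbit $\mathcal{O}^{\C}_{\rho_{\min},\mathrm{nilp}}$ and not on the particular real minimal orbit that $\rho_{\min}$ realizes, which is guaranteed by the form of Kobayashi's criterion as used in \cite[Theorem 10.1]{Okuda13}. Everything else is bookkeeping over the two classification tables.
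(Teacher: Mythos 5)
Your proposal is correct and follows exactly the route the paper takes: the paper's (essentially implicit) proof likewise combines the properness criterion of \cite[Theorem 10.1]{Okuda13} with the identity $\mathcal{O}^{\C}_{\rho_{\min},\mathrm{nilp}} = \mathcal{O}^{\C}_{\min,\mathfrak{g}}$ to get (1) $\Leftrightarrow$ (2), and then invokes Theorems \ref{theorem:main2} and \ref{theorem:main3} for (2) $\Leftrightarrow$ (3). Your added remark that the choice among the (at most two) real minimal orbits is immaterial, since all of them lie in the single complex orbit $\mathcal{O}^{\C}_{\min,\mathfrak{g}}$, is a correct and worthwhile clarification consistent with \cite[Theorems 1.1 and 1.3]{Okuda2015smallest}.
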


\section*{Acknowledgements.}
The author would like to give heartfelt thanks to Toshiyuki Kobayashi for his encouragement to write this paper. 
I am also indebted to Koichi Tojo and Ryusei Tsukada for many helpful comments.
I also would like to express my gratitude to the anonymous referees for their meticulous review of the manuscript.
The author is supported by JSPS Grants-in-Aid for Scientific Research JP20K03589, JP20K14310, JP22H01124, and JP24K06714.


\begin{thebibliography}{10}

\bibitem{Araki62}
Sh\^{o}r\^{o} Araki, \emph{On root systems and an infinitesimal classification of irreducible symmetric spaces}, J. Math. Osaka City Univ. \textbf{13} (1962), 1--34. \MR{153782}

\bibitem{Berger_classification}
Marcel Berger, \emph{Les espaces sym\'{e}triques noncompacts}, Ann. Sci. \'{E}cole Norm. Sup. (3) \textbf{74} (1957), 85--177. \MR{0104763}

\bibitem{Brylinski1998realminimalnilp}
Ranee Brylinski, \emph{Geometric quantization of real minimal nilpotent orbits}, vol.~9, 1998, Symplectic geometry, pp.~5--58. \MR{1636300}

\bibitem{CollingwoodMcGovern}
David~H. Collingwood and William~M. McGovern, \emph{Nilpotent orbits in semisimple {L}ie algebras}, Van Nostrand Reinhold Mathematics Series, Van Nostrand Reinhold Co., New York, 1993. \MR{1251060}

\bibitem{Helgason2001bookDLSS}
Sigurdur Helgason, \emph{Differential geometry, {L}ie groups, and symmetric spaces}, Graduate Studies in Mathematics, vol.~34, American Mathematical Society, Providence, RI, 2001, Corrected reprint of the 1978 original. \MR{1834454}

\bibitem{Kobayashi89}
Toshiyuki Kobayashi, \emph{Proper action on a homogeneous space of reductive type}, Math. Ann. \textbf{285} (1989), no.~2, 249--263. \MR{1016093}

\bibitem{Kobayashi2011Zuckerman60}
\bysame, \emph{Branching problems of {Z}uckerman derived functor modules}, Representation theory and mathematical physics, Contemp. Math., vol. 557, Amer. Math. Soc., Providence, RI, 2011, pp.~23--40. \MR{2848919}

\bibitem{KobayashiMRMR2022}
\bysame, \emph{Multiplicity in restricting minimal representations}, Lie theory and its applications in physics, Springer Proc. Math. Stat., vol. 396, Springer, Singapore, [2022] \copyright 2022, pp.~3--20. \MR{4607949}

\bibitem{Kobayashi2023BoundedSymmetric}
\bysame, \emph{Bounded multiplicity branching for symmetric pairs}, J. Lie Theory \textbf{33} (2023), no.~1, 305--328. \MR{4567759}

\bibitem{KobayashiOshimaY2015symmpair}
Toshiyuki Kobayashi and Yoshiki Oshima, \emph{Classification of symmetric pairs with discretely decomposable restrictions of {$(\mathfrak{g},K)$}-modules}, J. Reine Angew. Math. \textbf{703} (2015), 201--223. \MR{3353547}

\bibitem{Okuda13}
Takayuki Okuda, \emph{Classification of semisimple symmetric spaces with proper {$SL(2,\Bbb R)$}-actions}, J. Differential Geom. \textbf{94} (2013), no.~2, 301--342. \MR{3080484}

\bibitem{Okuda2015smallest}
\bysame, \emph{Smallest complex nilpotent orbits with real points}, J. Lie Theory \textbf{25} (2015), no.~2, 507--533. \MR{3346070}

\bibitem{OshimaSekiguchi84}
Toshio \={O}shima and Jir\={o} Sekiguchi, \emph{The restricted root system of a semisimple symmetric pair}, Group representations and systems of differential equations ({T}okyo, 1982), Adv. Stud. Pure Math., vol.~4, North-Holland, Amsterdam, 1984, pp.~433--497. \MR{810638}

\bibitem{Tamori2019MathZ}
Hiroyoshi Tamori, \emph{Classification of minimal representations of real simple {L}ie groups}, Math. Z. \textbf{292} (2019), no.~1-2, 387--402. \MR{3968907}

\end{thebibliography}

\providecommand{\bysame}{\leavevmode\hbox to3em{\hrulefill}\thinspace}
\providecommand{\MR}{\relax\ifhmode\unskip\space\fi MR }
\providecommand{\MRhref}[2]{%
  \href{http://www.ams.org/mathscinet-getitem?mr=#1}{#2}
}
\providecommand{\href}[2]{#2}

\end{document}